\documentclass{custom}

%
%

\usepackage{amsmath}
\usepackage{amssymb}

\usepackage[all,2cell,cmtip]{xy}

%
%
%


\def\G{{\mathbb G}}

\def\P{{\mathbb P}}

\def\cC{{\mathcal C}}
\def\cD{{\mathcal D}}
\def\cE{{\mathcal E}}
\def\cF{{\mathcal F}}

\def\cM{{\mathcal M}}
\def\cN{{\mathcal{N}}}
\def\cO{{\mathcal{O}}}
\def\cP{{\mathcal{P}}}

\def\cU{{\mathcal U}}

\def\cX{{\mathbb P(T_X)}}
\def\cY{{\mathbb Y}}

\def\G{{\mathbb{G}}}

\newcommand{\ol}[1]{\overline{#1}}

\def\fg{{\mathfrak g}}

\def\fsl{{\mathfrak{sl}}}

\def\lra{\longrightarrow}

\def\lra{\longrightarrow}
\def\rat{\dashrightarrow}

\def\operatorname#1{\mathop{\rm #1}\nolimits}

\def\Aut{\operatorname{Aut}}

\def\Exc{\operatorname{Exc}}

\def\Hom{\operatorname{Hom}}

\def\Spec{\operatorname{Spec}}

\def\codim{\operatorname{codim}}

\def\im{\operatorname{Im}}

\def\rat{\operatorname{RatCurves}}

\def\ev{{\operatorname{ev}}}





\makeatletter
 \newcommand*\tl[1]{\mathpalette\wthelper{#1}}
\newcommand*\wthelper[2]{%
        \hbox{\dimen@\accentfontxheight#1%
                \accentfontxheight#11.15\dimen@
                $\m@th#1\widetilde{#2}$%
                \accentfontxheight#1\dimen@
        }%
}

\newcommand*\accentfontxheight[1]{%
        \fontdimen5\ifx#1\displaystyle
                \textfont
        \else\ifx#1\textstyle
                \textfont
        \else\ifx#1\scriptstyle
                \scriptfont
        \else
                \scriptscriptfont
        \fi\fi\fi3
}
\makeatother
%
%
\begin{document}


\title{Uniform families of minimal rational curves on Fano manifolds \thanks{First author partially
 supported by PRIN project ``Geometria delle variet\`a algebriche'' and the Department of Mathematics of the University of Trento. Second author supported by the Korean National Researcher Program 2010-0020413 of NRF, and by the Polish National Science Center project 2013/08/A/ST1/00804. Third author partially supported by JSPS KAKENHI Grant Number 26800002.}}



\author{Gianluca Occhetta \and Luis E. Sol\'a Conde \and Kiwamu Watanabe}\thanks{sdsdfsdfsdfsdf}


\authorrunning{G. Occhetta \and L. E. Sol\'a Conde \and K. Watanabe} 

\institute{G. Occhetta \at
            Dipartimento di Matematica, Universit\`a di Trento, via
Sommarive 14 I-38123 Povo di Trento (TN), Italy \\
         \email{gianluca.occhetta@unitn.it}         
           \and
           L.E. Sol\'a Conde \at
            Dipartimento di Matematica, Universit\`a di Trento, via
Sommarive 14 I-38123 Povo di Trento (TN), Italy \\
\email{lesolac@gmail.com}
\and K. Watanabe \at 
Course of Mathematics, Programs in Mathematics, Electronics and Informatics,\\
Graduate School of Science and Engineering, Saitama University.\\
Shimo-Okubo 255, Sakura-ku Saitama-shi, 338-8570 Japan\\
\email{kwatanab@rimath.saitama-u.ac.jp}
}


\maketitle

\begin{abstract}
It is a well known fact that families of minimal rational curves on rational homogeneous manifolds of Picard number one are uniform, in the sense that the tangent bundle to the manifold has the same splitting type on each curve of the family. In this note we prove that certain --stronger-- uniformity conditions on a family of minimal rational curves on a Fano manifold of Picard number one allow to prove that the manifold is homogeneous.
\keywords{Fano manifolds \and  Homogeneity \and VMRT \and Dual varieties}
\subclass{Primary 14J45; Secondary 14M17, 14M22}
\end{abstract}


\maketitle

\section{Introduction}\label{sec:intro}
In the framework of higher dimensional complex algebraic geometry, rational homogeneous manifolds constitute one of the most important classes of examples of Fano manifolds. Their geometric properties may be written in the language of representation theory of complex reductive Lie groups, which makes this class one of the best understood. One of the most important questions in this context was inspired by Mori and posed by Campana and Peternell: Can rational homogeneity be described, within the class of Fano varieties, in terms of positivity properties of the tangent bundle? Namely, does the nefness of $T_X$ imply that a Fano manifold $X$ is homogeneous?

The strategy towards a solution of the Campana-Peternell problem that we have been considering recently (see \cite{MOSW}, \cite{MOSWW} and  \cite{OSWW}) is based on reconstructing the rational homogeneous structure upon families of minimal rational curves contained in the manifold $X$. Philosophically speaking, rational curves on Fano manifolds play a role as central as the one of $\fsl(2)$ in the representation theory of reductive groups, and this analogy should become obvious in our candidates to be rational homogeneous manifolds. 

One of the fundamental problems in this approach is to understand how nefness is reflected on properties of certain families of rational curves in $X$. For instance, one may pose the following question, which is still unanswered, to our best knowledge:

\begin{question}\label{quest:uniform}
Let $\cM$ be a locally unsplit dominating family of rational curves on a Fano manifold $X$ with nef tangent bundle. Is $\cM$ uniform or, at least, locally uniform? (see Definition \ref{def:uniform}).
\end{question}

Associated with $\cM$ one may consider the family of minimal sections of the Grothendieck projectivization of the tangent bundle $T_X$, denoted by $\P(T_X)$, over curves of the family $\cM$ (see Definition \ref{def:minsec2}), that we denote by $\ol{p}:\ol{\cU}\to\ol{\cM}$, and study the different splitting types of the tangent bundle of  $\P(T_X)$ with respect to curves of $\ol{\cM}$. Looking at the rational homogeneous examples, one may check that the family $\ol{\cM}$ is, in general, not uniform, and the different splitting types are related to the singular locus stratification of the crepant contraction of $\P(T_X)$ associated with the line bundle $\cO_{\P(T_X)}(1)$ --note that it is not yet clear whether the nefness assumption on the tangent bundle of a Fano manifold implies its semiampleness. This singular locus stratification is related, for a rational homogeneous manifold, to the orbit decomposition of the corresponding Lie algebra $\fg=H^0(X,T_X)$. In fact, the image of every stratum into $\P(\fg^\vee)$ is the closure of (the set of classes modulo homotheties of) a nilpotent orbit (see \cite{CoMc} for an account on these orbits). 

In this paper we analyze the simplest case, in which the families $\cM$ and $\ol{\cM}$ are both uniform (we simply say that $\cM$ is $2$-uniform, see Definition \ref{def:2-uniform}). 
The main result  shows that, up to a technical assumption, this property is only fulfilled by rational homogenous manifolds:

\begin{theorem}\label{thm:main}
Let $X$ be a Fano manifold of Picard number one 
supporting an unsplit $2$-uniform dominating family $\cM$ of rational curves. Assume moreover that the anticanonical degree  $-K_X\cdot\cM$ of the family is smaller than or equal to $2(\dim(X)+2)/3$.
Then $X=G/P$, where $G$ is a semisimple Lie group with Dynkin diagram $\cD$, $P$ is the parabolic subgroup associated to the $i$-th node of the diagram,  and the pair $(\cD,i)$ is one of the following:
$$
({\rm A}_{1},1),\quad({\rm A}_{k+1},2),\,\,\,k\geq 2,\quad
({\rm B}_2,1),\quad ({\rm D}_{5},5),\quad
({\rm E}_{6},1).
$$
\end{theorem}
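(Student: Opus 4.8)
The plan is to reconstruct the rational homogeneous structure directly from the combinatorics of the two uniform splitting types, using the theory of varieties of minimal rational tangents (VMRT) and the classification of their dual varieties. I would begin by recording what $2$-uniformity buys us: on a general curve $\ell\in\cM$ the tangent bundle splits as $T_X|_\ell \iso \cO(2)\osum\cO(1)^{\osum p}\osum\cO^{\osum(n-1-p)}$ (so $-K_X\cdot\cM = p+2$), and the uniformity of $\ol{\cM}$ translates, via the geometry of $\P(T_X)$ and its $\cO(1)$-contraction, into a very rigid statement about how the VMRT $\cC_x\subset\P(T_xX)$ sits inside projective space — concretely, that the second fundamental form and the projective second-order behaviour of $\cC_x$ are the same at every point, i.e. $\cC_x$ is (the closure of) a single orbit under its own automorphisms, and its dual variety $\cC_x^\vee$ is a linear space or has controlled codimension. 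The numerical hypothesis $-K_X\cdot\cM\le 2(\dim X+2)/3$ is exactly what is needed to force $\dim\cC_x = p-1$ to be small relative to $n-1$, so that the defect/dual-defect estimates pin $\cC_x$ down.

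The key steps, in order, would be: (1) translate $2$-uniformity into the statement that the VMRT $\cC_x\subset\P(T_xX)$ has, at a general point, a prescribed Gauss map and second fundamental form, independent of the point, and deduce from this that $\cC_x$ is a smooth variety with a transitive action of a connected algebraic group (here one uses results on VMRT of the kind developed by Hwang–Mok, together with the section-family $\ol p:\ol\cU\to\ol\cM$ to control the normal bundle of $\ell$ inside $X$ and hence the embedding of $\cC_x$). (2) Use the uniformity of $\ol\cM$ together with the crepant $\cO_{\P(T_X)}(1)$-contraction picture sketched in the introduction to show that the second-order data of $\cC_x$ — equivalently the partial flag structure of the nilpotent orbit whose closure is the image of the bottom stratum in $\P(\fg^\vee)$ — admits only the finite list appearing in the statement; this is where one matches against the classification of (co)minuscule and adjoint-type VMRTs, i.e. smooth projective varieties with one-apparent-double-point / small dual, giving the Veronese $v_2(\P^1)$, the Segre-like $\P^1\times\P^{k}$, the quadric surface in $\P^3$ wait that is $({\rm B}_2,1)$ giving the conic, the spinor tenfold $\mathbb S_5$ for $({\rm D}_5,5)$, and $\mathbb{OP}^2$-related $E_6/P_1$. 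Concretely the VMRTs that occur are a point ($({\rm A}_1,1)$), a rational normal curve $v_{k}(\P^1)\subset\P^{k}$... I would instead identify them as the list: $\emptyset$ or point; a smooth conic; Segre $\P^1\times\P^{k}$; $\P^1\times\P^2$... the cleanest formulation is that $(\cC_x\subset\P(T_xX))$ is forced to be one of the VMRTs of the rational homogeneous spaces in the list. (3) Having identified the VMRT at a general point together with its projective embedding, invoke the Cartan–Fubini type extension / recognition theorems (Hwang–Mok, Mok, Hong–Hwang) which state that a Fano manifold of Picard number one whose general VMRT is projectively equivalent to that of an irreducible Hermitian symmetric space (or the specific homogeneous spaces appearing here) is biholomorphic to that homogeneous space. (4) Finally, go through the short list of candidate VMRTs and read off the corresponding $(\cD,i)$, checking the degree constraint is consistent in each case.

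The main obstacle, I expect, is Step (2): turning the analytic/numerical content of "$\ol\cM$ is also uniform" into the precise projective-geometric statement that the VMRT is the orbit closure of a nilpotent element with a one-step (or otherwise extremely constrained) grading, and then matching this against the existing classification of homogeneous projective varieties with small dual. This requires carefully setting up the correspondence between splitting types of $T_{\P(T_X)}$ along minimal sections and the second fundamental form / normal bundle sequence of $\cC_x\subset\P(T_xX)$, controlling what happens along the non-general curves where the splitting jumps, and ensuring that the degree bound $-K_X\cdot\cM\le 2(\dim X+2)/3$ genuinely excludes every non-homogeneous possibility rather than merely the obvious ones; in particular one must rule out VMRTs that are smooth, have the right second-order behaviour on a Zariski-open set, but are not homogeneous, which is precisely where the Cartan–Fubini rigidity and the dual-variety dimension count must be combined. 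A secondary technical point is the "technical assumption" hidden in the hypotheses (the unsplitness of $\cM$ and the degree inequality), which must be used to guarantee that $\ol\cU\to\ol\cM$ behaves well — e.g. that the minimal sections exist and form a nice family and that $X$ has no small contractions interfering with the argument.
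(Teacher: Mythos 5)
Your endgame (identify the VMRT at a general point, then conclude with the Hong--Hwang recognition theorem, Theorem \ref{them:HH}) coincides with the paper's, but the central mechanism of your Steps (1)--(2) has a genuine gap. You propose to deduce from $2$-uniformity that $\cC_x$ has point-independent second fundamental form, hence is (the closure of) an orbit of its own automorphism group, and then to match it against a classification of \emph{homogeneous} varieties with small dual. Nothing in the hypotheses gives you transitivity: $2$-uniformity is a statement about splitting types of $T_X$ and $T_{\P(T_X)}$ along curves, and the only pointwise information it directly yields is about ranks of differentials, not about an automorphism group acting on $\cC_x$. Invoking a classification of homogeneous small-dual varieties before homogeneity is established is circular, and it is exactly the scenario you yourself flag (smooth, right second-order behaviour on an open set, yet non-homogeneous) that this route cannot exclude. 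The paper avoids this entirely: it first proves $\cC_x$ is irreducible (smoothness of $q$ from uniformity, Lemma \ref{lem:unifsmooth}, plus simple connectedness of $X$), identifies the locus $D_x$ swept out at $x$ by minimal sections of $\P(T_X)$ as the projective dual of $\cC_x$ (Proposition \ref{prop:dual}), and then uses $2$-uniformity only to say that the evaluation $\ol{q}$ of the family $\ol{\cM}$ has constant corank $e+1$ (Remark \ref{rem:corank}), hence is equidimensional. Feeding this into the machinery of duals of varieties with nodal singularities (Corollaries \ref{cor:smooth} and \ref{cor:embed}, resting on Fulton--Hansen and Zak's theorem on tangencies) gives that $\cC_x$ and $D_x=\cC_x^\vee$ are both \emph{smooth} and of the \emph{same dimension} -- two facts your proposal asserts but never proves; note that Proposition \ref{prop:imm} only gives immersivity of $\tau_x$ at standard members, so smoothness of $\cC_x$ is not automatic.

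The second discrepancy is the classification input. The degree bound $-K_X\cdot\cM\le 2(\dim X+2)/3$, i.e. $c\le 2(m-1)/3$, is used to place the smooth pair $\dim\cC_x=\dim\cC_x^\vee$ in the range of Ein's theorem \cite[Theorem 4.5]{Ein1}, which classifies \emph{arbitrary} smooth varieties with $\dim C=\dim C^\vee$ in that range: a hypersurface in $\P^2$ or $\P^3$ (forced to be a conic or quadric surface since its dual is smooth), the Segre $\P^1\times\P^{c-1}$, the Pl\"ucker-embedded $\G(1,4)$, and the spinor variety $S_4\subset\P^{15}$. Homogeneity of the VMRT is an output of this classification, not an input, and only afterwards does Theorem \ref{them:HH} apply; the linear case is handled separately ($X\cong\P^1$ via the degree bound). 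Your candidate list in Step (2) is also garbled (the conic corresponds to $({\rm B}_2,1)$, the Segre $\P^1\times\P^{c-1}$ to $({\rm A}_{k+1},2)$, $\G(1,4)$ to $({\rm D}_5,5)$ read through duality, and $S_4$ to $({\rm E}_6,1)$), which reflects that the proposal lacks the precise classification theorem that actually produces the list.
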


In the above statement the cited rational homogeneous manifolds are described in terms of their marked Dynkin diagrams (where the nodes have been numbered as in \cite[p. 58]{Hum}). 

Note that the number $-K_X\cdot\cM-2$ equals the dimension of the {\it variety of minimal rational tangents of $\cM$ at $x$}, denoted by $\cC_x\subset\P(\Omega_{X,x})$. In the range $-K_X\cdot\cM>2(\dim(X)+2)/3$ one could expect, via a positive answer to Hartshorne's conjecture in our particular situation, that $\cC_x$ is a complete intersection in $\P(\Omega_{X,x})$. If this is case, we may conclude the following:

\begin{corollary}\label{cor:main}
Let $X$ be a Fano manifold of Picard number one, not isomorphic to a projective space, supporting an unsplit $2$-uniform dominating family $\cM$ of rational curves, and assume that, for the general point $x$, $\cC_x$ is a complete intersection. Then $X$ is isomorphic to a smooth quadric. 
\end{corollary}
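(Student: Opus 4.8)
The plan is to split the argument according to the numerical bound in Theorem~\ref{thm:main}.

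Assume first that $-K_X\cdot\cM\le 2(\dim(X)+2)/3$. Then Theorem~\ref{thm:main} applies, so $X$ is one of the five marked rational homogeneous manifolds listed there, and I would simply go through the list, using that the varieties of minimal rational tangents of these manifolds are classically known. They are, respectively: a point (for $({\rm A}_1,1)$, i.e.\ $X=\P^1$); a Segre variety $\P^1\times\P^{k-1}$ in its minimal embedding (for $({\rm A}_{k+1},2)$); a smooth conic in $\P^2$ (for $({\rm B}_2,1)$, i.e.\ $X=Q^3$); the Pl\"ucker-embedded $\mathrm{Gr}(2,5)\subset\P^9$ (for $({\rm D}_5,5)$); and the spinor tenfold in $\P^{15}$ (for $({\rm E}_6,1)$). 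A degree count — a smooth complete intersection of codimension $c$ has degree $\ge 2^c$ — rules out the last two cases, and shows that $\P^1\times\P^{k-1}$ is a complete intersection only for $k=2$, where it is the quadric surface and $X=\mathrm{Gr}(2,4)\cong Q^4$. Discarding $\P^1$, which is a projective space, leaves exactly the smooth quadrics $Q^3$ and $Q^4$.

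Assume now that $-K_X\cdot\cM> 2(\dim(X)+2)/3$. Put $N:=\dim\P(\Omega_{X,x})=\dim(X)-1$ and $p:=\dim\cC_x=-K_X\cdot\cM-2$; the bound rewrites as $p>\tfrac23 N$, so the codimension $c:=N-p$ satisfies $2c<p$. For general $x$ the VMRT $\cC_x$ is a smooth irreducible complete intersection, hence it lies in Hartshorne's range; by Barth--Lefschetz it is simply connected with $\mathrm{Pic}(\cC_x)=\Z\,\cO_{\cC_x}(1)$. If $c=0$ then $\cC_x=\P^N$, and the variety-of-minimal-rational-tangents version of the Cho--Miyaoka--Shepherd-Barron characterization (Kebekus) forces $X$ to be a projective space, which is excluded; hence $c\ge 1$. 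The decisive step is to show that then $c=1$ and $\cC_x$ is a smooth quadric hypersurface. For this I would invoke the \emph{full} strength of $2$-uniformity: uniformity of the family $\ol{\cM}$ of minimal sections of $\P(T_X)$ forces the higher-order projective geometry of $\cC_x\subset\P(\Omega_{X,x})$ — its system of fundamental forms, equivalently its osculating flag — to be of constant type along $\cC_x$, and for a smooth complete intersection this rigidity forces every defining equation to be quadratic and the codimension to be $1$ (a hypersurface of degree $\ge 3$, or an intersection of two or more quadrics, has a nonconstant osculating structure). Granting this, $\cC_x$ is a smooth hyperquadric, which is exactly the VMRT of $Q^{\dim(X)}$, and the recognition of hyperquadrics from their variety of minimal rational tangents (Cartan--Fubini type extension, Hwang--Mok) gives $X\cong Q^{\dim(X)}$.

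The step I expect to be the main obstacle is precisely this last reduction in the second range. The numerical inequality $2c<p$ does not by itself exclude complete intersections such as a smooth cubic hypersurface or a smooth $(2,2)$-complete intersection, so the argument must genuinely extract from the uniformity of $\ol{\cM}$ the rigidity of the fundamental forms of $\cC_x$, and deduce from it that the only complete intersections occurring as such a VMRT are the hyperquadrics and the (already excluded) full projective space. The remaining ingredients — the enumeration in the low range via Theorem~\ref{thm:main}, and the two classical VMRT characterizations of $\P^n$ and of $Q^n$ — are standard.
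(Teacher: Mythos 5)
Your first case (the range where Theorem~\ref{thm:main} applies) is fine, if roundabout: listing the five homogeneous models and eliminating all but the quadrics by a degree count is a legitimate way to finish there. The problem is the second case, and it is a genuine gap, not just a rough edge. The ``decisive step'' you describe --- that uniformity of $\ol{\cM}$ forces the fundamental forms/osculating flag of $\cC_x$ to be ``of constant type'', and that this rules out cubic hypersurfaces and $(2,2)$-intersections --- is asserted, not proved, and it does not follow from anything in the paper or from any standard theorem you cite. Uniformity of $\ol{\cM}$ controls the splitting type of $T_{\P(T_X)}$ on minimal sections; via Remark~\ref{rem:corank} this translates into constancy of the corank of $\ol{q}$, i.e.\ into information about the \emph{dual} variety $D_x$ of $\cC_x$ (Proposition~\ref{prop:dual}), not directly into rigidity of the projective differential geometry of $\cC_x$ itself. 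In your second case you never bring $D_x$ into play at all, so the hypothesis of $2$-uniformity is effectively unused exactly where it is needed; the Barth--Lefschetz input ($\cC_x$ simply connected with cyclic Picard group) does not move you toward the quadric either. As written, a smooth cubic hypersurface as VMRT is not excluded by your argument.

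The paper closes this gap by a different and shorter route, which also makes your case split unnecessary. Running the $2$-uniformity argument from the proof of Theorem~\ref{thm:main} (constant corank of $\ol{q}$, Corollary~\ref{cor:smooth}, then Corollary~\ref{cor:embed} applied to both $\cC_x$ and $D_x$) gives that $\cC_x$ and $D_x$ are \emph{smooth of the same dimension}. Now the complete-intersection hypothesis is used through classical duality theory: a nonlinear complete intersection is never dual defective (\cite[Theorem~5.11]{Tev}, due to Ein), so $D_x$ is a hypersurface; since $\dim\cC_x=\dim D_x$, the VMRT $\cC_x$ is a hypersurface as well; and the only smooth nonlinear hypersurface with smooth dual is the quadric. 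Theorem~\ref{them:HH} then identifies $X$ with the smooth quadric. If you want to salvage your structure, replace your ``rigidity of fundamental forms'' step by exactly this dual-defect argument; the numerical threshold from Theorem~\ref{thm:main} then plays no role.
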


Putting this back into the context of the Campana--Peternell Conjecture and Question \ref{quest:uniform}, let us assume that $X$ is a manifold of Picard number one such that $T_X$ is nef and big, and let $\epsilon$ be the birational crepant contraction of $\P(T_X)$ associated to $\cO_{\P(T_X)}(1)$. It was proved in \cite{1-ample} that if the restriction of $\epsilon$ to $\Exc(\epsilon)$ has $1$-dimensional fibers, then necessarily $X$ is a smooth hyperquadric. Our main result in this paper implies the following extension of \cite{1-ample}:

\begin{corollary}\label{cor:e-ample}
Let $X$ be a Fano manifold of Picard number one, with nef and big tangent bundle, and assume that, with the same notation as above, the restriction of $\epsilon$ to $\Exc(\epsilon)$ is a smooth morphism. Assume moreover that $X$ supports an unsplit uniform dominating family of rational curves of anticanonical degree smaller than or equal to $2(\dim(X)+2)/3$. Then $X$ is rational homogeneous. 
\end{corollary}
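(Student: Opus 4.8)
The plan is to show that the hypotheses of Corollary~\ref{cor:e-ample} force the uniform family $\cM$ to be $2$-uniform, and then to conclude by Theorem~\ref{thm:main}. If $\Exc(\epsilon)=\emptyset$, then $\cO_{\P(T_X)}(1)$, hence $T_X$, is ample, so $X\cong\P^n$ by Mori's theorem, which is rational homogeneous; thus I may assume $\Exc(\epsilon)\neq\emptyset$, equivalently $X\not\cong\P^n$. Then $\cO_{\P(T_X)}(1)$ is nef and big but not ample, and for every $\ell\in\cM$ one has $T_X|_\ell\cong\cO(2)\oplus\cO(1)^{\oplus p}\oplus\cO^{\oplus q}$ with $p=-K_X\cdot\cM-2$ and $q=\dim X-1-p\geq 1$ (the trivial summand is nonzero, otherwise $X$ would be a projective space), all three numbers constant since $\cM$ is uniform. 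A minimal section over $\ell$ is the section of $\P(T_X|_\ell)\to\ell$ given by a degree-$0$ quotient of $T_X|_\ell$, and along it $\cO_{\P(T_X)}(1)$ has degree $0$; hence every curve $\ol{C}$ of $\ol{\cM}$ is contracted by $\epsilon$, the evaluation $\ol{\cU}\to\P(T_X)$ factors through $\Exc(\epsilon)$, the curves of $\ol{\cM}$ share a fixed numerical class $\beta$ with $\cO_{\P(T_X)}(1)\cdot\beta=0$, and (by a dimension count using that $\cM$ dominates $X$) they fill up the fibres of $\epsilon|_{\Exc(\epsilon)}$.

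It remains to prove that $\ol{\cM}$ is uniform. Put $W:=\epsilon(\Exc(\epsilon))$, and for $\ol{C}\in\ol{\cM}$ let $F$ be the fibre of $\epsilon|_{\Exc(\epsilon)}$ through $\ol{C}$. Since $\epsilon|_{\Exc(\epsilon)}$ is a smooth morphism, $\Exc(\epsilon)$, $W$ and all the $F$'s are smooth, the $F$'s are equidimensional, and the relative tangent sequence $0\to T_{\Exc(\epsilon)/W}\to T_{\Exc(\epsilon)}\to\epsilon^{*}T_W\to 0$ is exact with locally free terms. Restricted to the contracted curve $\ol{C}$ it splits, because $\epsilon^{*}T_W|_{\ol{C}}$ is trivial and $T_{\Exc(\epsilon)/W}|_{\ol{C}}=T_F|_{\ol{C}}$ is nef ($\ol{C}$ being a free rational curve in $F$), so $T_{\Exc(\epsilon)}|_{\ol{C}}\cong T_F|_{\ol{C}}\oplus\cO^{\oplus\dim W}$. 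Feeding this into the normal bundle sequence of the smooth exceptional divisor $\Exc(\epsilon)$ in $\P(T_X)$ --- whose normal bundle restricts to $\ol{C}$ as $\cO(-e)$ with $e:=-\Exc(\epsilon)\cdot\beta>0$ constant --- and using that $H^1(\P^1,-)$ vanishes on the relevant bundles, one obtains $T_{\P(T_X)}|_{\ol{C}}\cong\cO(-e)\oplus T_F|_{\ol{C}}\oplus\cO^{\oplus\dim W}$. Therefore $\ol{\cM}$ is uniform if and only if the splitting type of $T_F|_{\ol{C}}$ is independent of $\ol{C}$. The remaining, and crucial, point is to pin down the fibres: I would show that each $F$ is a projective space --- e.g. that $F$ is a smooth variety of Picard number one covered by the curves of $\ol{\cM}$ it contains, whose general member has no trivial summand in $T_F|_{\ol{C}}$, and then invoke the Cho--Miyaoka--Shepherd-Barron characterization --- or at least that $F$ is rational homogeneous; in either case its minimal family, and hence $T_F|_{\ol{C}}$, has constant splitting type. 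This shows $\ol{\cM}$ uniform, i.e. $\cM$ is $2$-uniform.

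Finally, $\cM$ is an unsplit, $2$-uniform, dominating family of rational curves on the Fano manifold $X$ of Picard number one with $-K_X\cdot\cM\leq 2(\dim X+2)/3$, so Theorem~\ref{thm:main} applies and exhibits $X$ as one of the five rational homogeneous spaces listed there; in particular $X$ is rational homogeneous, as claimed.

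I expect the main obstacle to be the control of the fibres $F$ of $\epsilon|_{\Exc(\epsilon)}$ in the second paragraph: identifying them precisely enough (ideally, as projective spaces or rational homogeneous varieties) to rule out any jump of the splitting type of $T_F$ along the curves of $\ol{\cM}$. In \cite{1-ample} the corresponding step is the baby case $\dim F=1$, where $F\cong\P^1$ and everything is explicit; here smoothness of $\epsilon|_{\Exc(\epsilon)}$ permits arbitrary --- but smooth and equidimensional --- fibres, so one must extract enough of their geometry to force $T_F|_{\ol{C}}$ to be constant.
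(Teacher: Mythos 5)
Your overall strategy---reduce to showing that $\ol{\cM}$ is uniform, so that $\cM$ is $2$-uniform and Theorem \ref{thm:main} applies---is the same as the paper's, but the execution has a concrete flaw and then stops exactly where the real work begins. The flaw: you treat $\Exc(\epsilon)$ as a smooth \emph{divisor} and use the intersection number $\Exc(\epsilon)\cdot\beta$ to get a normal bundle $\cO(-e)$ along $\ol{C}$, deducing $T_{\P(T_X)}|_{\ol{C}}\cong\cO(-e)\oplus T_F|_{\ol{C}}\oplus\cO^{\oplus\dim W}$. But $\epsilon$ is a crepant (essentially symplectic) contraction and its exceptional locus is in general small: by Proposition \ref{prop:dual} and Corollary \ref{cor:dualdef1s} the component $D=\ol{q}(\ol{\cU})$ has codimension $e+1$ in $\P(T_X)$, which is $\geq 2$ precisely in most of the target cases of Theorem \ref{thm:main} (e.g.\ $(\mathrm{A}_{k+1},2)$ with $k\geq 3$, $(\mathrm{D}_5,5)$, $(\mathrm{E}_6,1)$, whose VMRTs are dual defective). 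So the divisorial normal-bundle computation is not available, and your reduction of uniformity of $\ol{\cM}$ to constancy of the splitting type of $T_F|_{\ol{C}}$ does not stand as written; in higher codimension controlling the normal directions to $D$ along $\ol{C}$ is exactly as hard as the original problem.

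The step you yourself flag as the obstacle---pinning down the fibres $F$---is genuinely missing, and even the preliminary claim that the curves of $\ol{\cM}$ ``fill up the fibres of $\epsilon|_{\Exc(\epsilon)}$ by a dimension count'' is unproved: a priori a fibre could be strictly larger than the locus $L$ of curves of $\ol{\cM}$ through a point, and then CMSB cannot be invoked because you control neither $\dim F$ nor the dimension of the subfamily of curves through a general point of $F$. The paper closes this gap with two inputs you do not use: (i) the projective duality statement (Corollary \ref{cor:dualdef1s}), which gives $\codim(D\subset\P(T_X))=e+1$ and that the general fibre of $\ol{q}$ has dimension $e$, hence $\dim(L)=e+1$; and (ii) semismallness of $\epsilon$, inherited from the symplectic contraction of $\Spec_X(\bigoplus_r S^rT_X)$ (\cite{1-ample}, \cite{Kal}), which gives $\codim(E)\geq\dim(F)$ for the component $E\supseteq D$. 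The resulting chain $\codim(D)\geq\codim(E)\geq\dim(F)\geq\dim(L)=e+1=\codim(D)$ forces $D=E$ and $F=L$, so $F$ is a smooth $(e+1)$-fold covered by an unsplit family with an $e$-dimensional subfamily through a general point; then \cite{CMSB} gives $F\cong\P^{e+1}$ with the curves of $\ol{\cM}$ as lines, Hirzebruch--Kodaira rigidity \cite{HK} together with the assumed smoothness of $\epsilon|_{E}$ shows \emph{every} fibre is $\P^{e+1}$, and finally the subbundle $E(2,1^{e})\subset\ol{f}^{*}T_{\P(T_X)}$ combined with Proposition \ref{prop:splittype} and semicontinuity forces $e'=e$, i.e.\ uniformity of $\ol{\cM}$. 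Without (i) and (ii) your plan has no mechanism to identify the fibres, so the proposal is incomplete at its decisive point.
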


The structure of the paper is the following: Sections  \ref{sec:nodal} and \ref{sec:ratcurves} contain some preliminary material on  dual varieties and rational curves on projective varieties, respectively. Section \ref{sec:minsec} deals with the family of minimal sections of $\P(T_X)$ over curves of a family $\cM$, and the duality relation of its locus with the variety of minimal rational tangents of $\cM$. Finally Section \ref{sec:proofs} contains the proofs of the main results of the paper.

%
%


\section{Preliminaries on duals of varieties with nodal singularities}\label{sec:nodal}

We will recall here some standard facts about dual varieties that we will need later on, focusing in the concrete setting we are interested in, that is, in the context of varieties with only {\it nodal singularities}. Though these results are probably well known, they are usually stated in the context of smooth varieties (see  \cite{Ein1} or  \cite[Chapters 1,4,7]{Tev}), so we briefly present them here for the reader's convenience.\par
\medskip
\noindent{\bf Notation}
Throughout the paper we will work over the field of  complex numbers. By $\P(\cE)$ we will denote the Grothendieck projectivization of a complex vector space $\cE$, or of a vector bundle $\cE$ over a certain scheme that should be understood from the context. Along the paper we will often consider vector bundles on the projective line $\P^1$. For simplicity, we will denote by $E(a_1^{k_1},\dots,a_r^{k_r})$ the vector bundle $\bigoplus_{j=1}^r\cO(a_j)^{\oplus k_j}$ on $\P^1$.

\begin{definition}\label{def:dual}
Let $C\subset\P^r=\P(V)$ be an irreducible projective variety. Denoting by $C_0\subset C$ its subset of smooth points, the Euler sequence provides a surjection $\cO_{C_0}\otimes V^\vee\to \cN_{C_0,\P^r}(-1)$, so that we have a morphism $p_2:\P(\cN_{C_0,\P^r}(-1))\to \P(V^\vee)$. Then the closure of the image $C^\vee$ is called the {\it dual variety of} $C$. 
\end{definition}

In other words, $C^\vee$ may be described as the closure of the set of tangent hyperplanes of $C$. That is, we may consider $\P(\cN_{C_0,\P^r}(-1))$ as a subset of $\P(T_{\P^r})\subset\P^r\times{\P^r}^\vee$ and denote by $\cP$ its closure (this is the so-called {\it conormal variety of $C\subset\P^r$}). Then the restrictions ($p_1$ and $p_2$) to $\cP$ of the canonical projections have images $C$ and $C^\vee$, respectively:
$$
\xymatrix{\P^r&\P(T_{\P^r})\ar[r]\ar[l]&{\P^r}^\vee\\
C\ar@{^{(}->}[]+<0ex,2.5ex>;[u]&\cP\ar@{^{(}->}[]+<0ex,2.5ex>;[u]%
\ar[r]^{p_2}\ar[l]_{p_1}&C^\vee\ar@{^{(}->}[]+<0ex,2.5ex>;[u]}
$$
Finally, let us recall that the Biduality (also called Reflexivity) theorem (cf. \cite[Section 1.3]{Tev}) states that $(C^{\vee})^\vee=C$, so that the diagram above is reversible, and we may assert that the general fiber of $p_2$ (the so-called {\it tangency locus} of a hyperplane) is a linear space. In particular one expects $p_2$ to be, indeed, birational for most projective varieties.
\begin{definition}\label{def:defect}
With the same notation as above, the number $e(C):=r-1-\dim(C^\vee)$ is called the  {\it dual defect} of $C$. If $e(C)>0$, we say that $C$ is  {\it dual defective}.
\end{definition}

It is well known (cf. \cite[Theorem 4.25]{Tev}) that the dual defect of a nonlinear smooth subvariety $C\subset \P^r$ is smaller than or equal to $\codim(C)-1$. We will need an extension of this result to mildly singular varieties, a topic that may be understood in the context of the theory of discriminant varieties of linear systems (see, for instance, \cite{LPS}). 

\begin{definition}
We say  that $C$ has only {\it nodal singularities}, if its normalization $M$ is smooth and the normalization morphism $\nu:M\to C$ is unramified, i.e.  $\nu^*\Omega_C \to \Omega_M$ is surjective. 
\end{definition}

We will assume from now on that $C$ has only  nodal singularities.
Denoting by $t:M \to \P(V)$ the composition of $\nu$ and the inclusion $C \to \P(V)$, and by  $\cN$ the cokernel of $dt$, we have an injection of vector bundles over $M$:
$$
\xymatrix{0\ar[r]& T_{M}\ar[r]^(.40){dt}&t^*T_{\P(V)}\ar[r]&\cN\ar[r]& 0}
$$

Let us denote $c:=\dim(C)$, and $e:=e(C)$.
The sheaf $\cN$ is then a vector bundle, of rank equal to $\codim(C,\P(V))=r-c$. 

Furthermore, setting $\cO_M(1):=t^*(\cO_{\P(V)}(1))$ and considering the pull-back to $M$ of the Euler sequence on the projective space $\P(V)$, we see that $\cN(-1)$ is globally generated by $V^\vee=H^0(\P(V),\cO_{\P(V)}(1))^\vee$. In particular we then have a well defined morphism:
\begin{equation}\label{eq:psi}
\psi:\P(\cN(-1))\to\P(V^\vee),
\end{equation}
whose image is precisely the dual variety $C^\vee$; furthermore, we have a commutative diagram:
\begin{equation}\label{eq:psi2}
\xymatrix{\P(\cN(-1))\ar@/^1.2pc/[rr]^{\psi}\ar[r]\ar[d]&\cP\ar[r]\ar[d]&\P(V^\vee)\\M\ar[r]&C&}
\end{equation}
where, again, $\cP$ is the conormal variety of $C$. Note that $\cP$ is a $\P^{r-c-1}$-bundle over a certain open set of $C$, isomorphic to its inverse image in $\P(\cN(-1))$.

\begin{proposition}\label{prop:split}
Let $C\subset\P(V)$ be a variety with only nodal singularities and assume that, with the same notation as above, the defect $e$ is bigger than zero. Then its normalization $M$ is swept out by a family of projective spaces $\P^e$ of $\cO_M(1)$-degree equal to one, whose general element satisfies that $(N_{\P^e,M})_{|\P^1}\cong E(0^{\frac{c-e}{2}},1^{\frac{c-e}{2}})$, for every line $\P^1\subset\P^e$.
\end{proposition}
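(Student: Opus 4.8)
The plan is to analyze the structure of the conormal variety $\cP$ using the defectivity hypothesis $e>0$ and the Biduality theorem, transporting the classical argument for smooth varieties (as in \cite[Theorem 4.25]{Tev}) to our nodal setting. The starting observation is that, by Biduality, a general fiber of $p_2:\cP\to C^\vee$ is a linear space $\Lambda\cong\P^{e}$ inside the fiber $\P^{r-c-1}$ of $\cP\to C$ — more precisely, over the open locus where $\cP$ is the projectivized conormal bundle $\P(\cN(-1))$, this fiber sits inside a single projective space $\P((\cN(-1)|_{m})^{\vee})$ for $m\in M$. So first I would set $\ell:=\dim(C^\vee)=r-1-e$ and pick a general point $H\in C^\vee$; its tangency locus $\Loc(H)\subset C$ is the image $p_1(\Lambda)$ of the linear fiber $\Lambda=p_2^{-1}(H)\cong\P^{e}$. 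Because $C$ has only nodal singularities, $p_1$ restricted to $\Lambda$ lifts (for $H$ general, avoiding the nodal locus) to a morphism $\Lambda\to M$, and the key classical fact — the \emph{contact locus is a linear space of dimension $e$ lying linearly embedded in $\P(V)$} — survives verbatim because it only uses the first-order tangency condition along $M$, which is smooth. This gives the family of $\P^e$'s sweeping out $M$, each of $\cO_M(1)$-degree one (the embedding $\P^e\hookrightarrow\P(V)$ being linear).

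**Next** I would compute the normal bundle $N_{\P^e,M}$ restricted to a line $\P^1\subset\P^e$. The clean way is to use the standard exact sequence coming from the incidence/conormal geometry: along the contact locus $\Lambda\cong\P^e$, the hyperplane $H$ is tangent to $M$, which means $H$ contains the embedded tangent space $\mathbb{T}_mM$ for every $m\in\Lambda$; differentiating the tangency condition along $\Lambda$ produces a symmetric bilinear form — the second fundamental form of $M$ — whose kernel direction gives the isotropy. Concretely, restricting $t^*T_{\P(V)}$, $T_M$, $\cN$ to $\Lambda\cong\P^e$ and then to a line $\P^1\subset\Lambda$, the normal bundle sequence $0\to T_{\P^1}\to (T_M)|_{\P^1}\to (N_{\P^1,M})|_{\P^1}\to 0$ together with the further quotient $(N_{\P^1,M})|_{\P^1}\to (N_{\P^e,M})|_{\P^1}\to 0$ must be analyzed. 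Since $\P^e\subset M$ is linearly embedded with degree one and $M$ is Fano-like along it (indeed $\cO_M(1)$ has degree one on lines and $T_M$ is globally generated by $H^0$ of $t^*T_{\P(V)}$ up to the obstruction $\cN$), each line is a \emph{standard} rational curve; the only splitting types allowed for $(N_{\P^e,M})|_{\P^1}$ are $E(0^a,1^b)$ with $a+b=c-e$, since $(N_{\P^e,M})|_{\P^1}$ is globally generated (pulled from $\cN(-1)$'s global generation) hence has no negative summands, and $\cO_M(1)\cdot\P^1=1$ forces each summand to have degree $0$ or $1$. That $a=b=\tfrac{c-e}{2}$ — in particular that $c-e$ is even — is exactly the nondegeneracy of the second fundamental form of $M$ in the normal directions transverse to $\Lambda$, the same parity phenomenon that underlies the bound $e\le\codim(C)-1$ in the smooth case.

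**The main obstacle** I anticipate is precisely this last parity/symmetry point: establishing that the second fundamental form of $M$ along the contact locus $\Lambda$ induces a \emph{nondegenerate} symmetric pairing on $N_{\Lambda,M}$ (in the appropriate twist), which forces the half-and-half splitting $E(0^{(c-e)/2},1^{(c-e)/2})$. In the smooth case this is the content of the classical analysis of the "prolonged" second fundamental form (see \cite{Ein1}, \cite[Ch.~2]{Tev}); what requires care here is checking that \emph{nothing} in this computation uses smoothness of $C$ itself rather than smoothness of its normalization $M$ — i.e. that for a general tangent hyperplane $H$ the contact locus $\Lambda$ misses the nodal locus of $C$, so that all the differential-geometric input takes place on $M$ where $t$ is an unramified immersion. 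Once that genericity is secured, the argument is a faithful reproduction of the smooth case, and I would present it by first proving the linearity and degree-one statements, then the global generation of $(N_{\P^e,M})|_{\P^1}$, and finally the nondegeneracy of the relevant quadratic form to pin down the exponents.
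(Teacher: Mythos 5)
There is a genuine gap: your write-up is a plan that defers exactly the two steps that carry the whole proof, and the route you sketch for the first of them is not the one that actually works. You propose to secure the lifting of the contact locus $\Lambda=p_2^{-1}(H)$ to $M$ by arguing that for general $H$ the contact locus \emph{misses the nodal locus of} $C$. This genericity is not justified and cannot be expected in general: the nodal locus of $C$ may have dimension as large as $c-1$, while $\Lambda$ has dimension $e\geq 1$, so no dimension count prevents a general contact locus from meeting the singular points; nothing in the hypotheses rules this out. The paper's proof does not need (and does not claim) this. Instead it works upstairs: the fiber $F'_h\subset\P(\cN(-1))$ over a general $h\in C^\vee$ maps birationally to $\Lambda\cong\P^e\subset C$ and finitely to its image $F_h\subset M$ (because $t$ is finite), so the composition $F'_h\to F_h\to\P^e$ is finite and birational onto a \emph{smooth} variety, hence an isomorphism; this yields $F_h\cong\P^e$ even if $\P^e$ passes through nodes of $C$. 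That is a genuinely different mechanism from the genericity you invoke, and it is the reason the nodal hypothesis suffices.

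The second gap is in the splitting computation. Your claim that ``$\cO_M(1)\cdot\P^1=1$ forces each summand of $(N_{\P^e,M})|_{\P^1}$ to have degree $0$ or $1$'' is a non sequitur: a quotient of a bundle with summands of degree at most one can have summands of larger degree, so the upper bound needs an argument. And the decisive point --- that the degrees split exactly half-and-half, in particular that $c-e$ is even --- you attribute to the nondegeneracy of the second fundamental form along $\Lambda$, which you yourself flag as ``the main obstacle'' and never establish; transporting Ein's analysis to the nodal setting is precisely what has to be checked, not assumed. The paper avoids any appeal to the second fundamental form: it produces the self-duality $N_{F_h,M}^\vee(1)\cong N_{F_h,M}$ by a direct diagram chase combining the relative Euler sequence of $\P(\cN)$, the normal bundle sequence of the fiber inclusion $F'_h\subset\P(\cN)$ over $F_h$, and the sequence coming from $dt$; together with nefness of $N_{F_h,M}$ (giving $a_1\geq 0$), this isomorphism gives both the bound $a_{c-e}\leq 1$ and $\sum_i a_i=\frac{c-e}{2}$, hence the splitting $E(0^{\frac{c-e}{2}},1^{\frac{c-e}{2}})$. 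To complete your proof you would need either to carry out the second-fundamental-form analysis in the nodal setting or to replace it by such a cohomological/diagrammatic argument; as it stands, both load-bearing steps are missing.
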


\begin{proof}
The Biduality theorem tells us that, given a general point $h\in C^\vee$ (smooth point is enough), its inverse image in $\cP$ is isomorphic to $\P^e$, mapping one-to-one to a linear subspace contained in $C$. Its inverse image $F'_h$ into $\P(\cN(-1))$ is smooth and maps birationally to $\P^e\subset\cP$ (by the birationality of $\P(\cN(-1))\to \cP$). Consequently, it maps birationally also to $\P^e\subset C$. Moreover, it maps finite-to-one onto its image $F_h\subset M$ (because $\P(\cN(-1))\to \P(V^{\vee})$ is determined by its tautological line bundle $\cO_{\P(\cN(-1))}(1)$). Finally, since $t$ is finite, it follows that the composition
$$
F'_h\to F_h\to\P^e\subset C
$$
is finite and birational onto a smooth variety, hence it is an isomorphism. Thus $F_h\to\P^e$ is an isomorphism, too.

For the second part of the statement, note that we have short exact sequences on $F_h\cong F_h'$:
\begin{equation}\label{eq:ses1}
\xymatrix@=15pt{\cO_{F_h}\ar@{>->}[r]&\cN^\vee(1)_{|F_h}\ar@{->>}[r]&{T_{\P(\cN)|M}}_{|F'_h}\\
{T_{\P(\cN)|M}}_{|F'_h}\ar@{>->}[r]&N_{F'_h,\P(\cN)}=\cO_{F'_h}^{r-e-1}\ar@{->>}[r]&N_{F_h,M}\\
N_{F_h,M}(-1)\ar@{>->}[r]&\cO_{F_h}^{r-e}\ar@{->>}[r]&\cN(-1)_{|F_h}}
\end{equation}
The first is the restriction of the relative Euler sequence of $\P(\cN)$, the second comes from the differential of the natural map $\P(\cN_{|F_h})\to F_h$, and the third from the differential of the map $t:M \to \P(V)$. The three sequences
fit in the commutative diagram:
$$
\xymatrix@=30pt{\cO_{F_h}\ar@{>->}[r]\ar@{=}[d]&\cN^\vee(1)_{|F_h}\ar@{->>}[r]\ar@{>->}[d]&{T_{\P(\cN)|M}}_{|F'_h}\ar@{>->}[d]\\
          \cO_{F_h}\ar@{>->}[r]&\cO_{F_h}^{r-e}\ar@{->>}[r]\ar@{->>}[d]&\cO_{F_h}^{r-e-1}\ar@{->>}[d]\\
          &N_{F_h,M}^\vee(1)\ar[r]^{\cong}&N_{F_h,M}}
$$
We then have an isomorphism
\begin{equation}\label{eq:ses2}
N_{F_h,M}^\vee(1)\cong N_{F_h,M}.
\end{equation}

We consider now any line $\ell \subset F_h\subset M$. Since $N_{F_h,M}|_\ell \simeq E(a_1,\dots,a_{c-e})$,  with $a_1\leq\dots\leq a_{c-e}$, we note first that, since $N_{F_h,M}$ is nef, $a_1\geq 0$. Then the isomorphism (\ref{eq:ses2}) tells us that $a_{c-e}\leq 1$ and $\sum_i a_i=\frac{c-e}{2}$, so $N_{F_h,M}|_\ell \simeq E(0^{\frac{c-e}{2}},1^{\frac{c-e}{2}})$. This concludes the proof.
\end{proof}

\begin{corollary}\label{cor:embed}
Let $C\subset\P(V)$ be a variety with only nodal singularities, which  is not a linear subspace. Then the defect $e$ is smaller than or equal to $r-c-1$, that is $\dim(C)\leq\dim(C^\vee)$. Moreover, if equality holds, then   $t:M \to \P(V)$ is an embedding; in particular in this case $C$ is smooth.
\end{corollary}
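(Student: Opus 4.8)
The plan is to deduce both assertions from Proposition \ref{prop:split}. Suppose first that $e > 0$; I want to derive the inequality $e \le r - c - 1$ and analyze the equality case. By Proposition \ref{prop:split}, the normalization $M$ is covered by a family of linear spaces $\P^e$ of $\cO_M(1)$-degree one, and for a general member and any line $\ell \cong \P^1 \subset \P^e$ we have $N_{\P^e,M}|_\ell \cong E(0^{\frac{c-e}{2}},1^{\frac{c-e}{2}})$. The first thing to observe is that $\frac{c-e}{2}$ must be a nonnegative integer, which already forces $c \ge e$ and $c \equiv e \pmod 2$; but I need the strict inequality $e \le r - c - 1$, i.e. $e < \codim(C)$. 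For this I would combine the normal bundle of $\P^e$ in $M$ with the normal bundle of $M$ in $\P(V)$ (that is, the bundle $\cN$) via the restriction to $\ell$ of the exact sequence $0 \to N_{\P^e,M} \to N_{\P^e,\P(V)} \to \cN|_{\P^e} \to 0$. Since $\P^e \subset \P(V)$ is a linear subspace of a projective space, $N_{\P^e,\P(V)}|_\ell \cong E(1^{r-e})$; comparing with the splitting type $E(0^{\frac{c-e}{2}},1^{\frac{c-e}{2}})$ of $N_{\P^e,M}|_\ell$ shows that $\cN|_\ell$ must contain a trivial summand of rank at least $\frac{c-e}{2}$, hence (since $\cN$ has rank $r - c$) we get $\frac{c-e}{2} \le r - c$, i.e. $e \ge 3c - 2r$. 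That is not yet what I want; instead, the cleaner route is to use that $\P^e$ sits inside the fibre of the conormal variety over a general point of $C^\vee$ and $\dim C^\vee = r - 1 - e$, together with the fact that $\P(\cN(-1)) \to C^\vee$ has fibres of dimension $\dim \P(\cN(-1)) - \dim C^\vee = (r - c - 1 + c) - (r - 1 - e) = e$ generically — so the inequality $e \le r - c - 1$ is really just the statement that $C^\vee$ is nondegenerate (spans $\P(V^\vee)$), which holds because $C$ is not contained in a hyperplane after we reduce to the linearly normal case; alternatively it follows directly from $\dim C^\vee \ge \dim C$, which I would extract from the projection $\P(\cN(-1)) \to M \times C^\vee$ being generically finite onto its image together with $\dim \P(\cN(-1)) = r - 1 < \dim M + \dim C^\vee$ failing unless $\dim C^\vee \ge r - 1 - c$.

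Let me reorganize: the honest argument for $\dim(C) \le \dim(C^\vee)$ goes through Proposition \ref{prop:split} as follows. If $e > 0$, pick a general member $\P^e$ of the covering family and a general line $\ell \subset \P^e$; deforming $\ell$ inside $\P^e$ and then deforming $\P^e$ inside $M$, the union of all such lines is all of $M$, so a general point of $M$ lies on such an $\ell$. On $\ell$ we have the inclusion $N_{\ell, \P^e} \hookrightarrow N_{\ell, M}$, i.e. $E(1^{e-1}) \hookrightarrow N_{\ell,M}$, and the quotient is $N_{\P^e,M}|_\ell \cong E(0^{\frac{c-e}{2}}, 1^{\frac{c-e}{2}})$. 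Meanwhile $N_{\ell, M} \hookrightarrow N_{\ell, \P(V)} = E(1^{r-1})$ with quotient $\cN|_\ell$, and $\cN|_\ell$ is globally generated (it is a quotient of the trivial bundle $V^\vee \otimes \cO$ after the twist, so $\cN(-1)|_\ell$ is globally generated, hence $\cN|_\ell$ has all summands $\ge -1$, and being a quotient of $E(1^{r-1})$ it has all summands $\ge 0$... and in fact $\ge 1$ once we know $C$ is not a cone along $\ell$, or simply: $\cN(-1)|_\ell$ globally generated of some rank $r-c$). The key numerical point is that $\ell$ has $\cO_M(1)$-degree one, so $\deg(N_{\ell,M}) = \deg(t^*T_{\P(V)}|_\ell) - \deg T_\ell = r - 2$, and this equals $(e-1) + \frac{c-e}{2} = \frac{c + e - 2}{2}$... wait, that gives $r - 2 = \frac{c+e}{2} - 1$ only if $\deg N_{\P^e,M}|_\ell$ is being double-counted; rather $\deg N_{\ell,M} = \deg N_{\ell,\P^e} + \deg N_{\P^e,M}|_\ell = (e-1) + \frac{c-e}{2} = \frac{c+e}{2} - 1$, so $r - 2 = \frac{c+e}{2} - 1$, giving $e = 2r - c - 2 - c = 2(r-c) - 2$?? — I should not grind this here; the point is that the degree bookkeeping on $\ell$ pins down $e$ in terms of $r$ and $c$, and the resulting relation, combined with $e \ge 1$, yields $e \le r - c - 1$. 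I would carry out this degree computation carefully as the first step.

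For the equality case $e = r - c - 1$ (equivalently $\dim C^\vee = \dim C$): I would feed $e = r-c-1$ back into the splitting $N_{\P^e,M}|_\ell \cong E(0^{\frac{c-e}{2}}, 1^{\frac{c-e}{2}})$ and into the sequence $0 \to N_{\P^e,M} \to N_{\P^e,\P(V)} \to \cN|_{\P^e} \to 0$ restricted to $\ell$: since $\rk \cN = r - c = e + 1$ and $N_{\P^e,\P(V)}|_\ell = E(1^{r-e}) = E(1^{c+1})$, subtracting off $N_{\P^e,M}|_\ell = E(0^{\frac{c-e}{2}},1^{\frac{c-e}{2}})$ forces $\cN|_\ell = E(0^{\frac{c-e}{2}}, 1^{e+1 - \frac{c-e}{2}})$. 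But then one checks the total degree of $\cN|_\ell$ is too small, or rather one uses that $\cN(-1)$ is globally generated to conclude $\cN|_\ell$ can have a trivial summand only if ... — concretely, global generation of $\cN(-1)$ means $\cN(-1)|_\ell = t^*T_{\P(V)}|_\ell / (\text{g.g. subsheaf})$ has no negative summand, but a trivial summand of $\cN|_\ell$ gives an $\cO(-1)$ summand of $\cN(-1)|_\ell$, contradiction — unless $\frac{c-e}{2} = 0$, i.e. $c = e$. Combined with $e = r - c - 1$ this gives $c = r - c - 1$; but more importantly $c = e$ means $N_{\P^c, M}|_\ell \cong E(1^c)$ is ample, so $\P^c = M$ (a submanifold with ample normal bundle of the same dimension as the ambient, forcing $M = \P^c$)? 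That would contradict $C$ not being linear. The correct conclusion I am aiming at is rather: in the equality case the generic finiteness of $\P(\cN(-1)) \to C^\vee$ becomes birationality (fibre dimension $e$ equals $\dim \P(\cN(-1)) - \dim C^\vee$ and there is no room left for the $\P^e$'s over a general point unless they collapse), which by the Reflexivity theorem applied to $C^\vee$ forces the conormal map $\cP \to C$ to be birational as well, hence $t : M \to C$ is birational AND unramified AND $C$ is what it is; then a standard Zariski's-main-theorem argument (as in the proof of Proposition \ref{prop:split}, where finiteness + birationality onto a smooth variety gave an isomorphism) upgrades $t: M \to \P(V)$ to an embedding, so $C \cong M$ is smooth. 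The main obstacle I anticipate is precisely this last step — showing that $\dim C^\vee = \dim C$ forces the conormal variety to be birational over $C$ (not just over $C^\vee$), because a priori both projections from $\cP$ could be nontrivial; I expect to resolve it by a dimension count on the incidence $\cP \subset M \times C^\vee$ together with the linearity of the fibres of $p_2$ provided by Biduality, exactly as in Proposition \ref{prop:split}, and then to invoke that $t$ is finite and $C^\vee$ (hence, by reflexivity, the relevant strata of $C$) is reduced to conclude genericity of the isomorphism and then globalize.
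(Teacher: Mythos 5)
Your proposal does not close the main inequality, and the route you sketch cannot be repaired as stated. The degree bookkeeping on a line $\ell\subset\P^e$ cannot ``pin down $e$ in terms of $r$ and $c$'': in the chain $N_{\ell,\P^e}\subset N_{\ell,M}\subset N_{\ell,\P(V)}$ the middle term has degree $\deg t^*T_{\P(V)}|_\ell-\deg\cN|_\ell-2$, and $\deg\cN|_\ell$ is not known a priori, which is exactly why your own computation collapses into contradictory formulas. Your fallback dimension count is also off target: from $\cP\subset M\times C^\vee$ and $\dim\P(\cN(-1))=r-1$ you only get $\dim C^\vee\geq r-1-c$, i.e.\ the trivial bound $e\leq c$, which is \emph{not} the assertion $\dim C^\vee\geq\dim C$ (equivalently $e\leq r-c-1$) unless $c=r-1-c$; and the claim that the desired inequality ``is really just nondegeneracy of $C^\vee$'' is false. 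For smooth $C$ this inequality is Zak's Theorem on Tangencies, a genuinely nontrivial input, and the whole point of the corollary is to extend it to the nodal case. The paper does this by extracting from Proposition \ref{prop:split} only the elementary facts $e\leq c$, $e\equiv c\pmod 2$, and $e\neq c$ (nonlinearity), hence $e\leq c-2$; then, setting $h=r-c-1-e$, if $h\leq 0$ one gets $2c\geq r+1-h>r$, so the corollary of the Fulton--Hansen connectedness theorem applies to the finite unramified map $t:M\to\P(V)$ and shows it is an embedding; $C$ is then smooth and Zak's theorem yields $h\geq 0$, a contradiction unless $h\geq 0$, with the case $h=0$ giving the ``Moreover'' part for free. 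Neither Fulton--Hansen nor Zak appears in your proposal, and both are essential.

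The equality-case argument you outline is independently flawed. The map $\P(\cN(-1))\to C^\vee$ has general fiber $\P^e$ with $e=r-c-1$ possibly positive, so it does not ``become birational'' when $\dim C^\vee=\dim C$; and even granting birationality of $\cP\to C$, the final step fails: a morphism that is finite, birational and unramified onto its image need not be an embedding --- the normalization of a nodal curve has all three properties, which is precisely the singularity type allowed by the hypotheses. What is missing is injectivity of $t$, and that is exactly what the Fulton--Hansen corollary supplies once the numerical condition $2\dim M>r$ is in place; without it your Zariski-main-theorem style conclusion cannot be reached.
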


\begin{proof}
As an immediate consequence of Proposition \ref{prop:split}, the defect $e$ is smaller than or equal to $c$, and $c\equiv e$ modulo $2$. If $e=c$, then $C\subset\P(V)$ would be a linear subspace, a contradiction.
Hence we may assert that $e\leq c-2$.

Denoting $h:=r-c-1-e$, this inequality reads as
$c-2\geq e=r-c-1-h$, that is $2c\geq r+1-h>r-h$. 

Now, by a corollary of Fulton-Hansen Theorem (cf. \cite[Theorem 4.30]{Tev}, or \cite[Theorem~3.4.1]{L2}), if $h\leq 0$ it follows that  $t$ is an embedding, so $C$ is smooth, which in turn implies, by Zak's Theorem on Tangencies (cf. \cite{Zak}, \cite[Theorem 4.25]{Tev}), that $\dim(C)\leq \dim(C^\vee)$, that is $h\geq 0$.
\end{proof}

%

\begin{corollary}\label{cor:smooth}
Let $C\subset\P(V)$ be a variety with only nodal singularities. Assume that the morphism $\psi$ defined in (\ref{eq:psi}), considered as a surjective morphism onto its image $C^\vee$, is equidimensional. Then the normalization $\tl{C}^\vee$ of $C^\vee$ is smooth. 
\end{corollary}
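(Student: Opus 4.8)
The plan is to reduce the smoothness of the normalization $\tl{C}^\vee$ of $C^\vee$ to a statement about the fibers of $\psi$, using the structure of $\psi$ as a morphism of $\P(\cN(-1))$ (a projective bundle over the smooth variety $M$) onto $C^\vee$. First I would observe that, by the Biduality theorem, the general fiber of $\psi$ is a linear space $\P^e$ (embedded in $C^\vee$ as its inverse image in the conormal variety $\cP$, which is isomorphic to a linear subspace of $C$ via $p_1$, and the map $\P(\cN(-1))\to\cP$ is birational). The equidimensionality assumption then forces \emph{every} fiber of $\psi$ to have dimension exactly $e$, where $e=e(C)$ is the dual defect. Since $\P(\cN(-1))$ is smooth and irreducible of dimension $r-1$ (it is a $\P^{r-c-1}$-bundle over $M$, $\dim M=c$), and $C^\vee$ has dimension $r-1-e$, an equidimensional surjective morphism from a smooth variety to a (possibly singular) target is flat onto its image once the target is known to be reduced and we control its singular locus — but more directly, I would use that an equidimensional morphism with smooth source and irreducible fibers of the expected dimension behaves well after normalizing the base.

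The key step is then the following: form the Stein factorization, or rather pull back $\psi$ along the normalization $\tl{C}^\vee\to C^\vee$, obtaining $\tl\psi:\widetilde{\P(\cN(-1))}\to\tl{C}^\vee$ from the normalization of the fiber product. I would argue that $\tl\psi$ is again equidimensional with the same fiber dimension $e$, that its general fiber is still $\P^e$, and — crucially — that $\widetilde{\P(\cN(-1))}$ is smooth. For the last point one can try to show that the fiber product $\P(\cN(-1))\times_{C^\vee}\tl{C}^\vee$ is already normal, or work locally: an equidimensional morphism from a smooth variety whose fibers are all isomorphic to $\P^e$ (or at least all integral of dimension $e$) is, after base change to the normalization, a Zariski-locally trivial $\P^e$-fibration in the étale or even Zariski topology over the smooth locus, and more generally one can invoke that such a morphism exhibits $\tl{C}^\vee$ as a quotient that inherits smoothness. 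Concretely, since all fibers of $\psi$ are $\P^e$'s of $\cO_{\P(\cN(-1))}(1)$-degree one (by the argument in Proposition~\ref{prop:split}, the general fiber is such, and degree is locally constant in flat families — which we get from equidimensionality plus smoothness of the source and Cohen–Macaulayness), the morphism $\psi$ is a $\P^e$-bundle over $\tl{C}^\vee$ after normalization, and the total space being smooth forces the base $\tl{C}^\vee$ to be smooth.

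The main obstacle I anticipate is controlling the fibers of $\psi$ over the singular points of $C^\vee$: equidimensionality gives the right dimension but not a priori irreducibility or reducedness of these special fibers, and a priori $\psi$ could have, say, a fiber that is a non-reduced or reducible scheme of dimension $e$, which would obstruct the "$\P^e$-bundle" conclusion. To handle this I would use that $\psi$ is determined by the tautological bundle $\cO_{\P(\cN(-1))}(1)$ (as noted right after diagram~(\ref{eq:psi2}) and used in the proof of Proposition~\ref{prop:split}), so the fibers are cut out by linear conditions and cannot degenerate too badly; combined with the fact that an equidimensional morphism from a Cohen–Macaulay (here smooth) source to a variety is flat when the target is regular — and we are trying to prove the target normalization is regular — I would instead argue by upper-semicontinuity that the fibers of $\psi$ are all $\P^e$'s outright (any fiber of dimension $e$ whose generic nearby fiber is $\P^e$ must itself be $\P^e$ by rigidity of projective space, e.g. via the argument that a degree-one subvariety of a $\P^{r-c-1}$-fiber of the bundle $\P(\cN(-1))\to M$ spanning the right space is a linear $\P^e$), and conclude flatness, hence that $\psi$ is a projective bundle after normalization and $\tl{C}^\vee$ is smooth. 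The bookkeeping to make "all fibers are $\P^e$" rigorous, dealing with the map to $M$ and the degree-one condition fiberwise, is where the real work lies.
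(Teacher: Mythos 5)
Your overall strategy---pass to the normalization, use Biduality to identify the general fiber with a linearly embedded $\P^e$, and conclude that the map is a $\P^e$-bundle so that smoothness of the total space forces smoothness of the base---is the same as the paper's, but two points deserve comment. First, the detour through the fiber product is unnecessary and creates an artificial obstacle: since $\P(\cN(-1))$ is a projective bundle over the smooth variety $M$, it is normal, so $\psi$ factors \emph{directly} through $\tl{C}^\vee$, giving $\tl{\psi}:\P(\cN(-1))\to\tl{C}^\vee$ with the same smooth source. The smoothness of the normalized fiber product $\widetilde{\P(\cN(-1))}$, which you single out as ``crucial'', never has to be addressed.

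The genuine gap is the step ``all fibers are $\P^e$'s, hence the map is a $\P^e$-bundle'', which you yourself identify as ``where the real work lies''. The justifications you sketch do not close it: miracle flatness (equidimensional with Cohen--Macaulay source) needs the target to be \emph{regular}, which is exactly what is being proved---you notice this circularity---and ``rigidity of projective space'' does not apply to special fibers of a morphism not yet known to be flat; without flatness you have no control on the degree, reducedness or irreducibility of special fibers, and even in flat families a degeneration of $\P^e$ need not be $\P^e$ without further input. The paper resolves precisely this point by citing \cite[Lemma~2.12]{Fuj1}: an equidimensional proper surjective morphism onto a \emph{normal} variety, endowed with a relatively ample line bundle whose restriction to the general fiber is $(\P^e,\cO_{\P^e}(1))$, is a $\P^e$-bundle. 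It is applied to $\tl{\psi}$ with the pull-back of $\cO_C(1)$, which has degree one on the general fiber because that fiber maps isomorphically onto a linear $\P^e\subset C$ (this, and not the tautological bundle $\cO_{\P(\cN(-1))}(1)$, which is trivial on the fibers of $\psi$, is the relevant polarization); normality of $\tl{C}^\vee$ is what makes the lemma applicable and is the reason for normalizing in the first place. Unless you prove or invoke a statement of this kind, your argument does not go through; with it, the rest of your outline is correct and matches the paper.
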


\begin{proof}
We will consider the factorization $\tl{\psi}:\P(\cN(-1))\to \tl{C}^\vee$ of $\psi$ onto the normalization $\tl{C}^\vee$ of $C^\vee$. Since $\psi$ is equidimensional by hypothesis, so is $\tl{\psi}$. Moreover, as in the proof of Proposition \ref{prop:split}, the general fiber of $\psi:\P(\cN(-1))\to C^\vee$ is isomorphic to $\P^e$, hence the same holds for the fibers of $\tl{\psi}$. Finally the pull-back of $\cO_C(1)$ provides a $\tl{\psi}$-ample line bundle on $\P(\cN(-1))$ having degree one on the general fiber of the morphism, hence it follows by \cite[Lemma 2.12]{Fuj1} that $\tl{\psi}$ is a $\P^e$-bundle and, in particular $\tl{C}^\vee$ is smooth. 
\end{proof}


\section{Preliminaries on rational curves on Fano manifolds}\label{sec:ratcurves}

Let $X$ be a Fano manifold of Picard number one and dimension $m$, defined over the field of complex numbers. In this section we will 
briefly review, in our setting, some well known results on deformations of rational curves on $X$. Most of the listed results work on broader settings: we refer to \cite{Hw}, \cite{KS} and \cite{kollar} for more details.

A {\it family of rational curves on }$X$ is, by definition, the normalization $\cM$ of an irreducible component of the scheme $\rat^n(X)$. Each of these families comes equipped with a smooth $\P^1$-fibration $p:\cU\to \cM$ and an evaluation morphism $q:\cU\to X$. Given a point $x\in q(\cU)$, we denote by $\cM_x$ the normalization of the set $p(q^ {-1}(x))$, and by $\cU_x$ the normalization of its fiber product with $\cU$ over $\cM$. We say that the family $\cM$ is:
\begin{itemize}
\item {\it dominating} if $q$ is dominant,
\item {\it locally unsplit} if $\cM_x$ is proper for general $x\in X$,
\item {\it unsplit} if $\cM$ is proper.
\end{itemize}
Note that $\rat^n(X)$ is quasi-projective, hence the properness of $\cM_x$ or $\cM$ implies their projectivity. The anticanonical degree $-K_X\cdot\Gamma$ is the same for every curve $\Gamma$ of the family $\cM$, thus we will denote it by $-K_X\cdot\cM$; moreover, we will denote by $c(\cM)$, or simply $c$ if there is no possible confusion, the number $c:=-K_X\cdot\cM-2$.


\medskip


\begin{proposition}\label{prop:RCbasic} Let $X$ be a Fano manifold of Picard number one,  $\cM$ be a locally unsplit dominating family of rational curves, and set $c=-K_X\cdot \cM-2$. With the same notation as above, we have the following:
\begin{itemize}
\item[(1)] The variety $\cM$ has dimension $m+c-1$ and, for a general point $x \in X$, ${\cM}_x$ is a disjoint union of smooth projective varieties of dimension $c$.
\item[(2)] There exists a nonempty open set $X_0$ of $X$ satisfying that, for every $x\in X_0$, the normalization $f:\P^1\to \Gamma$ of any element $\Gamma$ of $\cM$ passing by $x$ is {\emph{free}}, that is $f^*T_X$ is a nef vector bundle. 
\item[(3)]  For a general $x\in X$, $p_{|\cU_x}:\cU_x\to \cM_x$ is a $\P^1$-bundle, admitting a section $\sigma_x$ whose image lies in $q^{-1}(x)$.
\item[(4)] A rational curve $\Gamma$ given by a general member of $\cM$, is {\em standard}, i.e., denoting by $f: \P^1 \to X$ its normalization, $f^{\ast}T_X 
\cong E(2,1^c,0^{m-c-1})$.
\end{itemize}
\end{proposition}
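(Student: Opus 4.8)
The plan is to derive the four assertions from the standard deformation theory of rational curves on $X$, establishing them in the order (2), (1), (3), (4), since each feeds into the next; for the technical inputs I would appeal to \cite{kollar}, \cite{Hw} and \cite{KS}.

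For (2) I would start from the fact that, $\cM$ being dominating, $q:\cU\to X$ is a dominant morphism of varieties, hence smooth over a dense open subset $X_0\subseteq X$ by generic smoothness. For $x\in X_0$ and $u\in q^{-1}(x)$, writing $f:\P^1\to X$ for the normalization of the corresponding member and $p_0\in\P^1$ for the point over $u$, smoothness of $q$ at $u$ implies, by standard deformation theory, that the evaluation map $H^0(\P^1,f^*T_X)\to(f^*T_X)_{p_0}$ is surjective, i.e.\ that $f^*T_X$ is globally generated at $p_0$; since a vector bundle on $\P^1$ globally generated at a single point has only non-negative summands, $f^*T_X$ is nef and $f$ is free. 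This would prove (2), and in fact the stronger statement that \emph{every} member of $\cM$ through a point of $X_0$ is free, which I would record for later use. For (1), a general point of the irreducible variety $\cU$ lies over $X_0$, so a general member of $\cM$ is free; for a free $f$ one has $H^1(\P^1,f^*T_X)=0$, hence $\Mor(\P^1,X)$ is smooth at $[f]$ of dimension $h^0(\P^1,f^*T_X)=-K_X\cdot\cM+\dim X=m+c+2$, and dividing by the almost-free action of $\operatorname{PGL}_2=\Aut(\P^1)$ gives $\dim\cM=m+c-1$. The same computation for morphisms sending a fixed point $p_0\in\P^1$ to $x\in X_0$ gives $H^1(\P^1,f^*T_X\otimes\fm_{p_0})=0$ as well (all summands now have degree $\ge-1$), so $\Mor(\P^1,X;\,p_0\mapsto x)$ is smooth at each $[f]$ coming from a member of $\cM_x$, of dimension $h^0(\P^1,f^*T_X\otimes\fm_{p_0})=c+2$; dividing by $\Aut(\P^1,p_0)$, of dimension $2$, shows $\cM_x$ smooth of pure dimension $c$. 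Being smooth, $\cM_x$ is the disjoint union of its connected components, each smooth and irreducible, each proper since $\cM$ is locally unsplit, hence projective as $\rat^n(X)$ is quasi-projective.

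For (3) I would note that, for general $x$, pulling back the smooth $\P^1$-fibration $p:\cU\to\cM$ along $\cM_x\to\cM$ gives a smooth $\P^1$-fibration over the smooth (by (1)) variety $\cM_x$, hence over a normal variety, so it already coincides with $\cU_x$; to promote it to a $\P^1$-bundle it is enough to exhibit a section. Here I would invoke Kebekus' analysis of the members of $\cM$ through a general point (see \cite{Hw}, \cite{KS}): for general $x$ the general such member is immersed at $x$ and passes simply through it, so $q^{-1}(x)\cap\cU_x$ is reduced and meets each fibre of $p_{|\cU_x}$ in exactly one point; being finite of degree one over the smooth base $\cM_x$ it is a section $\sigma_x$, contained in $q^{-1}(x)$ by construction, and a smooth $\P^1$-fibration with a section is the projectivization of the rank-two bundle $(p_{|\cU_x})_*\cO_{\cU_x}(\sigma_x)$, hence a $\P^1$-bundle. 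For (4) I would write $f^*T_X\cong\bigoplus_{i=1}^m\cO(a_i)$ for a general member, with $a_1\ge\dots\ge a_m\ge0$ by (2) and $\sum_i a_i=\deg(f^*T_X)=c+2$; the nonzero sheaf map $\cO(2)=T_{\P^1}\to f^*T_X$ forces $a_1\ge2$. The remaining point — that every other $a_i$ is $\le1$ — I would deduce from the finiteness, for general $x$, of the (rational) tangent map $\cM_x\to\P(\Omega_{X,x})$ assigning to a member its tangent direction at $x$; equivalently, from the bend-and-break principle that, $\cM$ being locally unsplit, no positive-dimensional family of members of $\cM$ can pass through a fixed general point with a prescribed tangent direction there. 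Identifying the fibres of this tangent map with deformations of $f$ having a fixed $1$-jet at $p_0=f^{-1}(x)$, finiteness gives $h^0(\P^1,f^*T_X\otimes\fm_{p_0}^{2})=\sum_{a_i\ge2}(a_i-1)\le1$; since $a_1\ge2$ the term $i=1$ already contributes at least $1$, so $a_1=2$ and $a_i\le1$ for all $i\ge2$, and then $\sum_i a_i=c+2$ forces exactly $c$ of the $a_i$ to equal $1$. This gives $f^*T_X\cong E(2,1^c,0^{m-c-1})$.

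The only genuinely non-formal input — and the step I expect to be the main obstacle — consists of the two facts used in (3) and (4): that through a general point of $X$ the general (in fact every) member of $\cM$ is immersed and passes simply through that point, and the finiteness of the associated tangent map, which is where the bend-and-break mechanism, and the locally unsplit hypothesis, really enter. Everything else reduces to the Euler sequence and the cohomology of split vector bundles on $\P^1$. In a self-contained treatment I would either cite \cite{kollar}, \cite{Hw}, \cite{KS} for these facts or reprove Kebekus' lemma by applying bend-and-break directly to the family $\cM_x$.
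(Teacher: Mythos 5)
Your proposal is essentially a from-scratch reconstruction of facts that the paper disposes of by citation: the paper's proof of this proposition consists of references to \cite[II.1.7 and 3.11]{kollar} for (1)--(2), to \cite[II. Theorem 2.12]{kollar} together with \cite[Theorem 3.3]{Ke2} for (3), and to \cite[IV. Corollary 2.9]{kollar} for (4). Your outline is exactly the standard deformation-theoretic argument underlying those citations (freeness via generic smoothness of evaluation, dimension counts from $h^0(f^*T_X)$ and $h^0(f^*T_X\otimes\fm_{p_0})$, Kebekus' results on curves through a general point, and the finiteness of the tangent map to control the splitting type), so in substance you and the paper are on the same route; the difference is only that you unpack the references.

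Three steps are looser than the results you are replacing and should be tightened. First, in (2) you apply generic smoothness to $q:\cU\to X$; but $\cU$ is only normal (it is a smooth $\P^1$-fibration over the normalization $\cM$ of a component of $\rat^n(X)$), and generic smoothness needs a smooth source. The conclusion you actually use --- surjectivity of $H^0(\P^1,f^*T_X)\oplus T_{p_0}\P^1\to (f^*T_X)_{p_0}$ for \emph{every} member through $x\in X_0$ --- is obtained in \cite[II.3.11]{kollar} by resolving the parameter space (or working on $\Mor(\P^1,X)$ itself) and noting that the differential of the evaluation factors through the resolution; with that fix your deduction (the image of $df(T_{p_0}\P^1)$ lies in the fibre of the positive part, hence global generation at $p_0$) is fine. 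Second, in (3) the claim that $q^{-1}(x)\cap\cU_x$ meets \emph{each} fibre of $p_{|\cU_x}$ in exactly one point does not follow from a statement about the general member: finitely many members through a general $x$ may be singular at $x$, and over those the fibre is met in several points. Kebekus' theorem gives precisely that these are finitely many, so $q^{-1}(x)$ is a finite set of points plus a unique component mapping one-to-one onto $\cM_x$; that component (finite and birational onto the smooth $\cM_x$) is your section. Third, in (4) the inference from finiteness of the tangent map to $h^0(\P^1,f^*T_X\otimes\fm_{p_0}^{2})\le 1$ identifies the Zariski tangent space of a fibre of the jet/tangent map with its dimension, which is not automatic if the fibre is singular at $[f]$. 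For the general member this is repaired by a generic-rank (Sard-type) argument: $\Mor(\P^1,X;p_0\mapsto x)$ is smooth of dimension $c+2$ at $[f]$, the tangent-direction map has image of dimension $c$ (finiteness of $\tau_x$), so at a general $[f]$ its differential has kernel of dimension exactly $2$, and this kernel contains $H^0(f^*T_X\otimes\fm_{p_0}^{2})$ plus the reparametrization direction $df(z\partial_z)$, giving the bound; alternatively one can quote the bend-and-break argument of \cite[IV.2.9]{kollar}, which is what the paper does. None of these issues affects the overall strategy, which is correct.
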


\begin{proof}
For the first and second statement, see \cite[II. 1.7~and~3.11]{kollar}. (3) follows by \cite[II. Theorem~2.12]{kollar}, and by the fact that the point $x$ determines a section of $p_{|p^{-1}(\cM_x)}:p^{-1}(\cM_x)\to \cM_x$: in fact, \cite[Theorem~3.3]{Ke2} implies that $q^{-1}(x)$ consists of a finite set of points and a unique component mapping one to one onto $\cM_x$ via $p$. Finally (4)
follows from \cite[IV. Corollary~2.9]{kollar}.
\end{proof}

\begin{definition}\label{def:uniform}
With the same notation as above, given the normalization $f:\P^1\to X$ of an element $\Gamma$ of $\cM$, we say that $\Gamma$ has {\it splitting type} $(a_1^{k_1},\dots,a_r^{k_r})$ on $X$, with $k_1+\dots +k_r=\dim(X)$, $a_1\leq\dots \leq a_r$, if $f^*T_X\cong E(a_1^{k_1},\dots,a_r^{k_r})$. 
We then say that $\cM$ is {\it uniform} (resp. {\it locally uniform}) if every curve $\Gamma$ of $\cM$ (resp. every curve of $\cM_x$, $x\in X$ general)  has the same splitting type.
\end{definition}

The following lemma is well known.

\begin{lemma}\label{lem:unifsmooth}
Let $X$ be a smooth variety and let $\cM$ be a locally unsplit dominating family of rational curves in $X$. If $T_X$ is nef on every curve of the family $\cM$ (in particular, if $\cM$ is uniform), then the evaluation morphism $q:\cU\to X$ is smooth. 
\end{lemma}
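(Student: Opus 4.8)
The plan is to deduce smoothness of $q:\cU\to X$ from the standard criterion that a morphism between smooth varieties is smooth precisely when it is flat with smooth fibers, or — more conveniently here — from the fact that for a dominant morphism from a variety $\cU$ to a smooth variety $X$, smoothness at a point $u$ is equivalent to the surjectivity of the differential $dq_u:T_{\cU,u}\to T_{X,q(u)}$ together with $\cU$ being smooth at $u$. Since $\cU$ is smooth (it is a $\P^1$-bundle over the smooth variety $\cM$), the only thing to check is surjectivity of $dq$ at every point of $\cU$.

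First I would fix an arbitrary point $u\in\cU$, lying over a curve $[\Gamma]\in\cM$ with normalization $f:\P^1\to X$, so that $u$ corresponds to a point $t_0\in\P^1$ with $f(t_0)=x$. The differential of $q$ along the fiber $p^{-1}([\Gamma])\cong\P^1$ and in the directions transverse to the fiber is governed by the standard exact sequence relating $T_\cU$, $p^*T_\cM$, the relative tangent sheaf $T_{\cU/\cM}$, and the pullback $q^*T_X$; concretely, the obstruction to $dq_u$ being surjective is measured by $H^1$ of the kernel of $f^*T_X\to$ (normal-bundle-type quotient), and the relevant vanishing is exactly $H^1(\P^1, f^*T_X(-t_0))=0$, i.e. $H^1(\P^1, f^*T_X\otimes\cO(-1))=0$.

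Next, this cohomology vanishing is immediate from the hypothesis: by assumption $T_X$ is nef on every curve of $\cM$, so $f^*T_X$ is a nef vector bundle on $\P^1$, hence a direct sum $E(a_1^{k_1},\dots,a_r^{k_r})$ with all $a_i\geq 0$; therefore $f^*T_X\otimes\cO(-1)$ is a sum of line bundles of degree $\geq -1$, which forces $H^1(\P^1, f^*T_X\otimes\cO(-1))=0$. (The uniform case is a special instance, since uniformity together with Proposition~\ref{prop:RCbasic}(4) gives splitting type $(2,1^c,0^{m-c-1})$ on every curve, which is nef.) Feeding this vanishing back into the deformation-theoretic sequence shows $dq_u$ is surjective at every $u\in\cU$, and since both $\cU$ and $X$ are smooth, $q$ is a smooth morphism.

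The main obstacle is purely bookkeeping: setting up the correct exact sequence on $\P^1$ that identifies the cokernel of $dq_u$ (or rather the obstruction space) with $H^1(\P^1, f^*T_X(-1))$, and making sure this is done uniformly over all of $\cU$ rather than just over a general point — one wants to invoke that $q$ is generically smooth and then promote this to everywhere-smoothness using the fiberwise criterion, which needs the nefness hypothesis to hold on \emph{every} curve of $\cM$, not merely the general one. This is precisely why the statement restricts to families that are nef on all members (or uniform), and I would emphasize that point. Given the criterion and the vanishing, no further argument is needed, so I expect the proof to be short.
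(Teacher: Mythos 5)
Your argument is correct and is essentially the paper's: the paper likewise deduces from the nefness hypothesis that $dq$ has constant maximal rank (via global generation of $f^*T_X$, i.e.\ the vanishing $H^1(\P^1,f^*T_X(-1))=0$ you spell out) and then obtains flatness, hence smoothness, from miracle flatness over the smooth base $X$, which is exactly what the smooth-variety criterion you invoke amounts to. The only point you should make explicit is that smoothness of $\cM$ (hence of $\cU$) is not automatic for the normalization of a component of $\rat^n(X)$, but follows from the same hypothesis: nefness of $f^*T_X$ makes every member free, and the parameter space is smooth along free curves.
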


\begin{proof}
The nefness condition implies that 
the differential $dq$ has constant maximal rank and, in particular, that the morphism $q$ is equidimensional. But since $X$ is smooth, it follows that $q:\cU\to X$ is also flat (cf. \cite[Theorem 23.1]{Ma}), hence smooth. 
\end{proof}


\subsection{Variety of minimal rational tangents of a family $\cM$}\label{ssec:VMRT}

With the same notation as above, if $\cM$ is a locally unsplit dominating family, and $x\in X$ is a general point, then $\cM_x$ is smooth, and (3) from Proposition \ref{prop:RCbasic} allows us to claim that there exists a rational map $\tau_x$ from $\cM_x$ to 
$\P(\Omega_{X,x})$, called the {\it tangent map of $\cM$ at $x$}, sending the general element of $\cM_x$ to its tangent direction at $x$ (see Remark \ref{rem:O1} below). It is known (cf. \cite[Theorem~3.4]{Ke2}, \cite[Theorem~1]{HM2}) that $\tau_x$ is a finite  and birational morphism (hence it is the normalization of its image) onto a variety $\cC_x$ usually called the {\it variety of minimal rational tangents} (VMRT, for short) of $\cM$ at $x$. The closure of the union of all the $\cC_x$, $x\in X$ general, into $\P(\Omega_X)$, 
is denoted by $\cC$. 

\begin{remark}\label{rem:O1} Given a general point $x\in X$, the map $\tau_x$ may be understood as follows. Let us consider the section $\sigma_x:\cM_x\to \cU_x$ defined in Proposition \ref{prop:RCbasic} (3), and let $K_{\cU_x/\cM_x}$ denote the relative canonical divisor of $\cU_x \to \cM_x$. Via the composition of the morphisms $T_{\cU_x/\cM_x}|_{q^{-1}(x)} \hookrightarrow T_{\cU_x}|_{q^{-1}(x)} $ and $T_{\cU_x}|_{q^{-1}(x)} \to T_{X,x}\otimes \cO_{q^{-1}(x)}$, it follows from \cite[Theorem~3.3]{Ke2} that $T_{\cU_x/\cM_x}|_{q^{-1}(x)} $ is a subbundle of $T_{X,x}\otimes \cO_{q^{-1}(x)}$. Restricting this inclusion of vector bundles to $\cM_x$ via $\sigma_x$ yields a morphism $\cM_x \to \P(\Omega_{X,x})$, which is nothing but the tangent map $\tau_x$. Hence the tangent map $\tau_x$ satisfies
\begin{equation}
\tau_x^{\ast}\cO_{\P(\Omega_{X,x})}(1)=\cO_{\cM_x}(\sigma_x^*K_{\cU_x/\cM_x}).
\end{equation}
In particular, this line bundle 
is ample and globally generated.
\end{remark}

\begin{proposition}\cite[Proposition~2.7]{Ar}\label{prop:imm} With the same notation as above, given a locally unsplit dominating family of rational curves $\cM$ on $X$, and a general point $x\in X$,
$\tau_x$ is immersive at $[\Gamma] \in \cM_x$ if and only if $\Gamma$ is standard.
\end{proposition}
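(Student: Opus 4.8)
The plan is to compute the differential of $\tau_x$ at a point $[\Gamma]\in\cM_x$ in terms of the normal bundle $N_{\Gamma/X}$ and compare ranks. First I would recall, using Remark \ref{rem:O1}, that $\tau_x$ is the restriction along the section $\sigma_x:\cM_x\to\cU_x$ of the inclusion $T_{\cU_x/\cM_x}|_{q^{-1}(x)}\hookrightarrow T_{X,x}\otimes\cO_{q^{-1}(x)}$. Consequently the differential $d\tau_x$ at $[\Gamma]$ factors through the deformation theory of $\Gamma$ with a marked point: the tangent space $T_{[\Gamma]}\cM_x$ is $H^0(\P^1, f^*T_X\otimes\fm_y)/\langle\text{reparametrizations}\rangle$, i.e. it is identified with $H^0(\P^1, N_{\Gamma/X}(-1))$ modulo the (trivial, since we quotient by automorphisms fixing $y$) contribution, where $f:\P^1\to\Gamma$ is the normalization, $y=f^{-1}(x)$, and $\fm_y$ is the ideal of $y$. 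On the other side, the tangent space to $\P(\Omega_{X,x})$ at $[T_\Gamma]$, quotiented appropriately, is $\Hom(T_{\Gamma,x}, T_{X,x}/T_{\Gamma,x})\cong (N_{\Gamma/X})_x$, the fiber at $y$ of the normal bundle. Under these identifications $d\tau_x$ becomes the evaluation-at-$y$ map $H^0(\P^1, N_{\Gamma/X}(-1))\to (N_{\Gamma/X}(-1))_y = (N_{\Gamma/X})_y$.

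The second step is then purely a statement about vector bundles on $\P^1$. Write $N_{\Gamma/X}\cong E(a_1,\dots,a_{m-1})$ with $a_1\le\dots\le a_{m-1}$; since $\Gamma$ lies in a locally unsplit dominating family through a general point, by Proposition \ref{prop:RCbasic}(2) $f$ is free, so $N_{\Gamma/X}$ is nef, i.e. all $a_i\ge0$, and $\sum a_i=-K_X\cdot\cM-2=c$. The evaluation map $H^0(N_{\Gamma/X}(-1))\to(N_{\Gamma/X}(-1))_y$ is surjective if and only if $N_{\Gamma/X}(-1)$ is globally generated at $y$, which (for a bundle on $\P^1$) happens exactly when every summand has degree $\ge0$, i.e. every $a_i\ge1$ among those summands — more precisely it is surjective onto the full fiber iff $a_i\ge1$ for all $i$, and more to the point $d\tau_x$ is injective iff $h^0(N_{\Gamma/X}(-1))\le\dim\cC_x=c$, with equality being forced. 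Since $h^0(E(a_1,\dots,a_{m-1})(-1))=\sum_{a_i\ge1}a_i$, injectivity of $d\tau_x$ is equivalent to $\sum_{a_i\ge1}a_i\le\sum_i a_i$ having no room to spare, i.e. to there being no summand with $a_i\ge2$ forcing an extra section — one checks $h^0(N_{\Gamma/X}(-1))=c$ exactly when all $a_i\le1$, and $h^0(N_{\Gamma/X}(-1))>c$ as soon as some $a_i\ge2$. Thus $d\tau_x$ is injective $\iff$ $a_i\le1$ for all $i$ $\iff$ $N_{\Gamma/X}\cong E(1^c,0^{m-1-c})$ $\iff$ $f^*T_X\cong E(2,1^c,0^{m-c-1})$, which is precisely the condition that $\Gamma$ be standard by Proposition \ref{prop:RCbasic}(4).

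The last step is to upgrade ``injective differential'' to ``immersive'', i.e. to know that $\tau_x$ is unramified at $[\Gamma]$; but since $\tau_x$ is already known to be a finite birational morphism onto $\cC_x$ (by \cite[Theorem~3.4]{Ke2}, \cite[Theorem~1]{HM2}) and $\cM_x$ is smooth of dimension $c$ at a general point, injectivity of the differential of $\tau_x$ at $[\Gamma]$ is the same as $\tau_x$ being immersive there, so no further argument is needed. The main obstacle, and the point requiring care, is the precise bookkeeping in the first step: correctly identifying $T_{[\Gamma]}\cM_x$ with $H^0(N_{\Gamma/X}(-1))$ and $d\tau_x$ with the fiber evaluation map, keeping track of the marked point $y$ and of which quotients by automorphisms of $\P^1$ have already been taken. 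Once that identification is in place, the rest is the elementary cohomology computation on $\P^1$ sketched above. (This is essentially the content of \cite[Proposition~2.7]{Ar}, which we are citing; the proof above is the standard argument.)
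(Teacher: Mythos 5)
The paper gives no proof of this proposition --- it is quoted directly from \cite[Proposition~2.7]{Ar} --- so your argument has to stand on its own, and as written it does not: the decisive numerical step is wrong. Your overall plan (identify $T_{[\Gamma]}\cM_x$ with $H^0(\P^1,N_{\Gamma/X}(-y))$ and $d\tau_x$ with evaluation at the point $y$ over $x$) is indeed the standard route, essentially that of \cite{Ar} and \cite[Proposition~1.4]{Hw}. But the criterion you then use for injectivity is false. For any nef bundle $N\cong E(a_1,\dots,a_{m-1})$ on $\P^1$ one has $h^0(N(-1))=\sum_{a_i\ge 1}a_i=\sum_i a_i=\deg N$, since summands with $a_i=0$ contribute nothing to either sum; hence $h^0(N_{\Gamma/X}(-1))=c$ for \emph{every} free immersed member of the family, whatever its splitting type. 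Your claimed dichotomy ``$h^0(N_{\Gamma/X}(-1))=c$ exactly when all $a_i\le 1$, and $>c$ as soon as some $a_i\ge 2$'' fails already for $E(2,0)$ versus $E(1,1)$, which have the same $h^0$ after twisting by $-1$. Moreover the asserted equivalence ``$d\tau_x$ injective iff $h^0(N_{\Gamma/X}(-1))\le\dim\cC_x$'' is a non sequitur: injectivity of a linear map is not detected by comparing the dimension of its source with the dimension of the image variety.

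The correct way to finish along your lines is to compute the kernel of the map you set up: once $d\tau_x$ is identified with the evaluation $H^0(N_{\Gamma/X}(-y))\to$ (fiber of $N_{\Gamma/X}(-y)$ at $y$) --- and this identification is itself the real content of the cited results, so it deserves proof or a precise reference --- its kernel is $H^0(N_{\Gamma/X}(-2y))$, which vanishes if and only if every summand of $N_{\Gamma/X}$ has degree at most $1$; together with nefness, $\deg N_{\Gamma/X}=c$ and the splitting of the extension by $T_{\P^1}\cong\cO(2)$, this is equivalent to $f^*T_X\cong E(2,1^c,0^{m-c-1})$, i.e.\ to $\Gamma$ being standard. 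Two further points need care in the ``only if'' direction: a non-standard curve through the general $x$ may fail to be immersed away from $x$, so $N_{\Gamma/X}$ (properly, the cokernel of $T_{\P^1}\to f^*T_X$) can have torsion and your clean statement ``$N_{\Gamma/X}\cong E(a_1,\dots,a_{m-1})$ nef'' does not apply verbatim; and the identification $T_{[\Gamma]}\cM_x\cong H^0(N_{\Gamma/X}(-y))$ must be justified at such a curve as well (for general $x$ one at least knows immersedness at $x$ by \cite[Theorem~3.3]{Ke2}). Araujo's proof handles both by working with $f^*T_X$ directly. So: right strategy, but the key computation must be replaced by the kernel computation $H^0(N_{\Gamma/X}(-2y))$, and the degenerate (non-immersed) case must be addressed.
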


%
%

%
%
%
%

Note that it is well known that for a rational homogeneous manifold $S=G/P$ ($G$ semisimple, $P$ parabolic subgroup) of Picard number one, the generator $A$ of the Picard group is very ample (Theorem of Borel--Weil, see \cite{JPS}) and there exists an unsplit dominating family $\cM_S$ of curves of degree one with respect to $A$. One of the key results we will use tells us that, under certain conditions, one may decide whether a Fano manifold of Picard number one is homogeneous by comparing the VMRT of one of its locally unsplit dominating families of rational curves with the VMRT of $\cM_S$. 
\begin{theorem}\label{them:HH} Let $X$ be a Fano manifold of Picard number one, $\cM$ be a locally unsplit dominating family of rational curves on $X$, and $x$ be a general point in $X$. Let $S=G/P$ be a rational homogeneous manifold of Picard number one as above, and $o\in S$ be any point. Assume moreover that $P$ is a parabolic subgroup corresponding to a long simple root of $G$. If the VMRT $\cC_x\subset \P(T_{X,x}^\vee)$ of $\cM$ at $x\in X$ is projectively equivalent to the VMRT $\cC_o \subset \P(T_{S,o}^\vee)$ of $\cM_S$ at $o$, then $X$ is isomorphic to $S$.
\end{theorem}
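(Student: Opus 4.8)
The statement is a recognition theorem: we must reconstruct $X$ from the infinitesimal data $\cC_x\subset\P(T_{X,x}^\vee)$. The plan is to proceed in two stages --- a \emph{local} one, producing a germ of biholomorphism between a neighbourhood of a general point $x\in X$ and a neighbourhood of $o\in S$ which carries minimal rational tangents to minimal rational tangents, and a \emph{global} one, upgrading this germ to an isomorphism $X\iso S$. The hypothesis that $P$ corresponds to a long simple root enters only in the local stage, as the reason why a certain obstruction vanishes.

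For the local stage, recall from Proposition~\ref{prop:RCbasic} and Section~\ref{ssec:VMRT} that over a suitable Zariski-open $X^\circ\subseteq X$ the variety $\cC\subset\P(\Omega_X)$ is smooth and fibred over $X^\circ$, with each fibre $\cC_x$ projectively equivalent to the fixed $\cC_o$. Following Hwang--Mok, this equips $X^\circ$ with a holomorphic $\G$-structure (a cone structure modelled on the affine cone $\widehat{\cC}_o\subset T_{S,o}^\vee$, with $\G$ the identity component of its group of linear automorphisms), whose flat model is $S$ with its homogeneous VMRT structure, encoded by the graded simple Lie algebra $\fg=\mathrm{Lie}(G)=\bigoplus_i\fg_i$ attached to the marked node. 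Because $\cM$ is dominating and $X$ has Picard number one, the distribution spanned by $\cC$ is bracket-generating, so the symbol algebra $\fm=\bigoplus_{i<0}\fg_i$ is fundamental and Tanaka's prolongation/equivalence theory applies: the obstruction to $X^\circ$ being locally isomorphic to the model is a curvature function valued in the positive-weight part of the Lie-algebra cohomology $H^1(\fm,\fg)$. The key input is then Yamaguchi's rigidity theorem, which via Kostant's cohomology computation shows that for a parabolic attached to a long simple root this cohomology vanishes --- equivalently, the prolongation of $\fm$ is rigid and equals $\fg$ --- with the sole exception of the projective structures, i.e. $S=\P^n$, which is handled apart by the characterization of $\P^n$ by its VMRT. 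Hence the curvature vanishes identically, $X^\circ$ is locally biholomorphic to an open subset of $S$ by a map matching $\cC$ with $\cC_S$, and we obtain the desired germ $\varphi$ at $x$. (If $\cC_o$ is a point then $S=\P^1$ and the whole statement is elementary.)

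For the global stage, note that $X$ and $S$ are Fano of Picard number one and that $\cC_o$, being the VMRT of a homogeneous space of Picard number one, is irreducible, positive-dimensional and not a linear subspace of $\P(T_{S,o}^\vee)$ (the linear case is precisely $S=\P^n$, already excluded). We are therefore in the setting of the Cartan--Fubini type extension theorem of Hwang--Mok: a germ of biholomorphism between two Fano manifolds of Picard number one respecting their VMRT structures analytically continues along every path to a global biholomorphism. Applied to $\varphi$, this yields $X\iso S$. The main obstacle throughout is the local stage --- concretely the cohomological vanishing (the ``prolongation rigidity'') that genuinely depends on the long-root hypothesis, together with the routine but necessary checks that set up the cone structure (smoothness of $\cC\to X^\circ$, bracket-generation of the symbol) and the nondegeneracy needed to invoke the Cartan--Fubini extension.
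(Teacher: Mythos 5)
The paper does not actually prove Theorem \ref{them:HH}: it is quoted as a known result, due to Mok \cite{Mk3} in the Hermitian symmetric and contact cases and to Hong--Hwang \cite{HH} for a general long simple root, so the only meaningful comparison is with those cited proofs. Your outline reconstructs their overall architecture correctly --- a local stage producing a germ of biholomorphism matching the VMRT cone structures, followed by the Hwang--Mok Cartan--Fubini extension theorem to globalize --- and the global stage as you describe it is fine (irreducibility, positive dimension and non-linearity of $\cC_o$, excluding $\P^n$ separately).

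The genuine gap is in the local stage. You assert that the obstruction to local equivalence with $S$ is a curvature valued in the positive part of $H^{1}(\fm,\fg)$ and that it vanishes by Yamaguchi's rigidity via Kostant's computation for long roots. This is wrong on two counts. First, for the regular normal parabolic geometry attached to such a filtered structure the curvature obstruction lives in $H^{2}_{+}(\fm,\fg)$; the vanishing of $H^{1}_{+}$ (prolongation rigidity) only says that the underlying graded structure determines a canonical Cartan connection, not that this connection is flat. Second, $H^{2}_{+}(\fm,\fg)$ does \emph{not} vanish for most long-root models: for $\cC_o$ a quadric the structure is a conformal structure (nonzero Weyl curvature exists), and for the Grassmannian case $({\rm A}_{k+1},2)$ non-flat Grassmannian structures exist; as stated, your argument would prove local flatness of arbitrary conformal or Grassmannian structures, which is false. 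In Mok's and Hong--Hwang's proofs the flatness is not a purely Lie-algebraic fact: it is established by exploiting the minimal rational curves themselves --- the (harmonic) curvature components are sections of bundles that restrict negatively to the curves of $\cM$, which dominate $X$, combined with the deformation theory of these curves and the special structure of the model VMRT; the long-root hypothesis enters through Kostant-type weight computations constraining where the harmonic curvature can sit and through the identification of the symbol algebra with $\fm$. So your plan names the right two stages, but the decisive vanishing in the local stage is misattributed, and without the rational-curve input that step fails.
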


This result was proven by Mok (see \cite{Mk3}) for Hermitian symmetric spaces and homogeneous contact manifolds, and then generalized by Hwang and Hong, who stated Theorem \ref{them:HH} in \cite{HH}.


\section{Minimal sections of $\P(T_X)$ over rational curves}\label{sec:minsec}
Along this section, $X$ will denote a Fano manifold of Picard number one and dimension $m$, not isomorphic to a projective space, and $\cM$ a locally unsplit dominating family of rational curves in $X$, of anticanonical degree $c+2$. We will use for them the notations introduced in Section \ref{sec:ratcurves}. Furthermore, we will denote by $\phi:\P(T_X)\to X$ the natural projection from the Grothendieck projectivization of $T_X$ onto $X$. 


\begin{definition}\label{def:minsec1}
Given a free element $\Gamma$ of $\cM$, with normalization $f:\P^1\to X$, a section of $\P(T_X)$ over $\Gamma$  corresponding to a surjective map $f^*T_X\to \cO_{\P^1}$ will be called {\it minimal} and the corresponding rational curve will be denoted by $\overline{\Gamma}$. 
 \end{definition}
 
\begin{remark}
Minimal sections as in the above definition exist whenever $X$ is different from the projective space, by the Main Theorem of \cite{CMSB}.
\end{remark}

We may now consider a nonempty open subset $\cM_0\subset\cM$ parametrizing standard curves, and denote by $p_0:\cU_0\to \cM_0$ and $q_0:\cU_0\to X$ the corresponding family and an evaluation morphism respectively. Then $p^{}_{0*}q_0^*\Omega_X$ and $p_0^*p^{}_{0*}q_0^*\Omega_X$ are rank $m-1-c$ vector bundles over $\cM_0$ and $\cU_0$, respectively, and the natural morphism $\ol{p}_0:\ol{\cU}_0:=\P(p_0^*(p^{}_{0*}q_0^*\Omega_X)^\vee)\to \ol{\cM}_0:=\P((p^{}_{0*}q_0^*\Omega_X)^\vee)$ is a smooth $\P^1$-fibration. Together with the natural morphism $\ol{q}_0:\ol{\cU}_0\to\P(T_X)$, this data provides a family of rational curves in $\P(T_X)$, and hence a morphism from $\ol{\cM}_0$ to $\rat^n(\P(T_X))$, which is injective. Moreover, by construction, the image of every element of $\ol{\cM}_0$ in $\rat^n(\P(T_X))$ corresponds to a minimal section of $\P(T_X)$ over a curve of $\cM_0$. Let us denote by $\ol{\cM}$ the normalization of the closure of its image, and by $\ol{p}:\ol{\cU}\to\ol{\cM}$, $\ol{q}:\ol{\cU}\to\P(T_X)$ the corresponding universal family and evaluation morphism. 
Furthermore, as we will see later, $\ol{\cM}$ is, in fact, the normalization of an irreducible component of $\rat^n(\P(T_X))$.

\begin{definition}\label{def:minsec2}
With the same notation as above, given a locally unsplit dominating family $\cM$, the family of rational curves parametrized by $\ol{\cM}$ is called the {\it family of minimal sections of $\P(T_X)$ over curves of $\cM$}.
\end{definition}

Moreover, we have natural morphisms $\phi_\cM:\ol{\cM}\to\cM$, $\phi_\cU:\ol{\cU}\to\cU$, 
fitting in the following commutative diagram: 
\begin{equation}\label{eq:curvessec}
\xymatrix@=35pt{\ol{\cM}\ar[d]^{\phi_\cM}&\ol{\cU}\ar[d]^{\phi_\cU}\ar[l]_{\ol{p}}\ar[r]^{\ol{q}}&\P(T_X)\ar[d]^\phi\\{\cM}&{\cU}\ar[l]_p\ar[r]^q&X}
\end{equation}
Note that Proposition \ref{prop:RCbasic} (4) implies that  the fibers of $\phi_\cM$ over every standard curve of $\cM_0\subset \cM$ are isomorphic to $\P^{m-c -2}$, so $\overline{\cM} $ has dimension $2m-3$. However, the image of $\ol{q}$, that we will denote by $D$, may have dimension smaller than $\dim(\overline{\cU})=2m-2$.

\begin{definition}\label{def:2-uniform}
We say that $\cM$ is $2$-{\it uniform} if both $\cM$ and $\overline{\cM}$ are uniform.
\end{definition}


\subsection{Projective duality for VMRT's}\label{ssec:dualVMRT}

We will study now the relation between the minimal sections of $\P(T_X)$ over rational curves of a locally unsplit dominating family $\cM$, and its VMRT's.
We first recall that $\P(T_X)$ supports a {\em contact structure} $\cF$, defined as the kernel of the composition of the differential of the natural projection $\phi:\cX\to X$ with the co-unit map
$$
\theta: T_{\cX}\stackrel{d\phi}{\longrightarrow}\phi^*T_X=\phi^*\phi_*\cO(1)\longrightarrow\cO(1).
$$
Note that $\theta$ fits in the following commutative diagram, with exact rows and columns:
\begin{equation}\label{eq:contact}
\xymatrix@=35pt{T_{\P(T_X)/X}\ar@{>->}[r]\ar@{=}[d]&\cF\ar@{->>}[r]\ar@{>->}[]+<0ex,-2ex>;[d]&\Omega_{\P(T_X)/X}(1)\ar@{>->}[d]\\
          T_{\P(T_X)/X}\ar@{>->}[r]&T_{\cX}\ar@{->>}[r]\ar@{->>}[d]^{\theta}&\phi^*T_X\ar@{->>}[d]\\
          &\cO(1)\ar@{=}[r]&\cO(1)}
\end{equation}

The distribution $\cF$ being contact means precisely that it is maximally non integrable, i.e. that the morphism $d\theta:\cF\otimes\cF\to T_X/\cF\cong \cO(1)$ induced by the Lie bracket is everywhere non-degenerate. This fact can be shown locally analytically, by considering, around every point, local coordinates $(x_1,\dots,x_m)$ and vector fields $(\zeta_1,\dots,\zeta_m)$, satisfying $\zeta_i(x_j)=\delta_{ij}$. Then the contact structure is determined, around that point, by the $1$-form $\sum_{i=1}^m\zeta_idx_i$ (see \cite{KPSW} for details).

The next proposition describes the infinitesimal deformations of a general minimal section $\overline{\Gamma} $.
\begin{proposition}\label{prop:splittype}
Let $\overline{f} :\P^1\to \P(T_X)$ denote the normalization of a minimal section $\overline{\Gamma} $ of $\P(T_X)$ over a standard rational curve in the class $\Gamma $. Then $\overline{\cM} $ is the normalization of an irreducible component of $\rat^n(\P(T_X))$, smooth at $\overline{\Gamma} $, of dimension $2m-3$, and
 $$
\overline{f}^*T_{\P(T_X)}\cong E\big(-2,2,(-1)^{e },1^{e },0^{2m-3-2e }\big),\mbox{ for some }e \leq c .
$$
\end{proposition}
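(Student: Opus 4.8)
The plan is to analyze the restriction $\overline{f}^*T_{\P(T_X)}$ by pulling back the contact-structure diagram \eqref{eq:contact} to $\P^1$ and exploiting the fact that $\overline{\Gamma}$ is a minimal section over a standard curve. First I would compute the three summands of \eqref{eq:contact} restricted to $\overline{\Gamma}$. By Proposition \ref{prop:RCbasic}(4), $f^*T_X\cong E(2,1^c,0^{m-c-1})$; since $\overline\Gamma$ is a minimal section it corresponds to a quotient $f^*T_X\to\cO_{\P^1}$, so $\phi_*\cO(1)$ pulls back along $\overline f$ to $E(2,1^c,0^{m-c-1})$ and the relative tautological quotient $\cO(1)$ restricts to $\cO_{\P^1}$. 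From the right-hand vertical exact column $0\to\Omega_{\P(T_X)/X}(1)\to\phi^*T_X\to\cO(1)\to 0$ pulled back to $\overline\Gamma$, one gets $\overline f^*\Omega_{\P(T_X)/X}(1)$ as the kernel of the surjection $E(2,1^c,0^{m-c-1})\to\cO_{\P^1}$; because $\overline\Gamma$ is the section killing a trivial quotient summand, this kernel is $E(2,1^c,0^{m-c-2})$, hence $\overline f^*T_{\P(T_X)/X}=\overline f^*\Omega_{\P(T_X)/X}(1)\otimes\cO(-1)=E(2,1^c,0^{m-c-2})$. So far this gives $\overline f^*\cF = \overline f^*T_{\P(T_X)/X}\oplus\overline f^*\Omega_{\P(T_X)/X}(1)$ has the shape $E\big(2,2,1^{c},1^{c},0^{2m-2c-4}\big)$ up to extension, total rank $2m-2$, and $\overline f^*T_{\P(T_X)}$ sits in $0\to\overline f^*\cF\to\overline f^*T_{\P(T_X)}\to\cO(1)\to 0$ with $\cO(1)|_{\overline\Gamma}=\cO_{\P^1}$; that already forces total degree $= \deg\overline f^*\cF$, and one checks $\deg\overline f^*T_{\P(T_X)} = -K_{\P(T_X)}\cdot\overline\Gamma$, which is computed from $-K_{\P(T_X)} = m\,\cO(1)+\phi^*(-K_X - c_1(T_X))$ — wait, more simply $-K_{\P(T_X)}=m\cdot\cO_{\P(T_X)}(1)$ is false in general, so I would instead use $-K_{\P(T_X)/X}=m\,\cO(1)-\phi^*\det T_X$ and $-K_X\cdot\Gamma = c+2$ together with $\cO(1)\cdot\overline\Gamma=0$ to get $-K_{\P(T_X)}\cdot\overline\Gamma = -K_X\cdot\Gamma - (\det T_X\cdot\Gamma) \cdot 1 + \dots$; the upshot is $\deg \overline f^*T_{\P(T_X)} = 0$, matching the claimed type $E(-2,2,(-1)^e,1^e,0^{2m-3-2e})$ of rank $2m-2$ and degree $0$.

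Next I would pin down the shape. The key extra input is the contact form: $d\theta:\cF\otimes\cF\to\cO(1)$ is a non-degenerate skew pairing, so restricting to $\overline\Gamma$ it induces an isomorphism $\overline f^*\cF\cong (\overline f^*\cF)^\vee\otimes\cO(1)|_{\overline\Gamma}=(\overline f^*\cF)^\vee$. This symplectic self-duality forces the splitting type of $\overline f^*\cF$ to be symmetric about $0$: if $\overline f^*\cF=E(b_1,\dots,b_{2m-2})$ with $b_1\le\cdots\le b_{2m-2}$ then $b_i+b_{2m-1-i}=0$. Combined with the extension computed above (the subbundle $\overline f^*T_{\P(T_X)/X}=E(2,1^c,0^{m-c-2})$ and quotient $\overline f^*\Omega_{\P(T_X)/X}(1)=E(2,1^c,0^{m-c-2})$, these being Lagrangian with respect to $d\theta$, which is the content of \eqref{eq:contact}: the columns show $T_{\P(T_X)/X}$ and $\Omega_{\P(T_X)/X}(1)$ are the two pieces of $\cF$ and they are isotropic), the only way a rank-$(2m-2)$, self-dual bundle can be an extension of two copies of $E(2,1^c,0^{m-c-2})$ — one a sub, one a quotient — is $E(2,2,1^{2c'},0^{?},(-1)^{2c''},\dots)$; imposing the symmetric constraint and that the highest summand is exactly $2$ (it cannot exceed $2$ since each Lagrangian piece has top summand $2$ and $h^1$-vanishing/genericity controls the extension) gives $\overline f^*\cF\cong E\big(-2,2,(-1)^{c},1^{c},(-1)^{e-c}\cdots\big)$ — more precisely, writing the type of $\cF|_{\overline\Gamma}$ as $E(-2,2,(-1)^e,1^e,0^{2m-4-2e})$ for some $e$ with $c\le ?$, and then $\overline f^*T_{\P(T_X)}$ is an extension by $\cO_{\P^1}$, which merges with the $0$-part, yielding rank $2m-1$... here I must be careful: $\dim\P(T_X)=2m-1$, so $T_{\P(T_X)}$ has rank $2m-1$, the claimed type has rank $2+2e+(2m-3-2e)=2m-1$. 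Good. So the $\cO(1)|_{\overline\Gamma}=\cO_{\P^1}$ extension simply adds one more $0$ to the $0^{2m-4-2e}$ block giving $0^{2m-3-2e}$, provided the extension $0\to\cF|_{\overline\Gamma}\to T_{\P(T_X)}|_{\overline\Gamma}\to\cO_{\P^1}\to 0$ splits off a trivial summand — which holds because $H^1(\P^1,\cF|_{\overline\Gamma}\otimes\cO_{\P^1})$ has the $-2$ summand contributing, but one shows the extension class lands in a part that still allows pulling out an $\cO_{\P^1}$; alternatively use that $\overline\Gamma$ is free in $\P(T_X)$ (it deforms to cover $D$) to rule out negative summands beyond the single $-2$.

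The remaining points are the smoothness and dimension claims for $\overline\cM$ at $\overline\Gamma$. Once the splitting type $\overline f^*T_{\P(T_X)}=E(-2,2,(-1)^e,1^e,0^{2m-3-2e})$ is established, the deformation theory of rational curves gives $h^0(\P^1,\overline f^*T_{\P(T_X)}) = 3 + 2\cdot 2e + (2m-3-2e) + \max(0,-2+1)\cdot 0 = 2m+2e$; subtracting $3=\dim\Aut(\P^1)$ gives $\dim_{[\overline\Gamma]}\rat^n(\P(T_X)) = h^0-3 + (\text{correction from }H^1)$, and since the only negative summand is $\cO(-2)$ with $h^1=1$ contributing a priori obstruction but the component through $\overline\Gamma$ is the one dominating $D$ with the expected dimension, one gets that $\rat^n(\P(T_X))$ is smooth of dimension $2m+2e-3$ at $[\overline\Gamma]$. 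But I also know from the construction preceding Definition \ref{def:minsec2} that $\overline\cM$ has dimension $2m-3$ and its general member is a minimal section over a standard curve; matching $2m+2e-3 \geq 2m-3$ shows the locus of minimal sections is a component precisely when $e$ takes its generic value, and a dimension count (the fibers of $\phi_\cM$ over standard curves are $\P^{m-c-2}$, and $\dim\cM=m+c-1$, so $\dim\overline\cM=2m-3$ exactly) forces $\overline\cM$ to be (the normalization of) a full component, smooth at $\overline\Gamma$ of dimension $2m-3$. \textbf{The main obstacle} I expect is the bookkeeping that nails down the top summand as exactly $2$ and the precise count $2e$ of the $\pm1$ summands: a priori the extension $0\to T_{\P(T_X)/X}\to\cF\to\Omega_{\P(T_X)/X}(1)\to 0$ restricted to $\overline\Gamma$ could produce summands of the form $(3,1^{c-1},\dots)$ if it is non-generic. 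Ruling this out requires using that $\overline\Gamma$ is \emph{minimal} (so the extension is as balanced as possible, i.e. the section $f^*T_X\to\cO$ is general among those with trivial cokernel), combined with the symplectic self-duality of $\cF|_{\overline\Gamma}$ which already excludes a summand $\geq 3$ (it would force a partner $\leq -3$, making $\cF|_{\overline\Gamma}$ non-nef on a subbundle in a way incompatible with $\overline\Gamma$ being free and $T_{\P(T_X)/X}|_{\overline\Gamma}$ being nef). So the self-duality of the contact distribution, together with the standardness of $\Gamma$ and freeness of $\overline\Gamma$, is exactly what does the work, and the remaining effort is organizing these constraints carefully.
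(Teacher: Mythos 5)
Your overall plan (pull back diagram (\ref{eq:contact}) along $\overline{f}$, use the contact self-duality $\overline{f}^*\cF\cong\overline{f}^*\cF^\vee$, then apply deformation theory) is the same as the paper's, but the central computation is wrong and it breaks the argument. For the Grothendieck projectivization, the relative Euler sequence pulled back along $\overline{f}$ (with $\overline{f}^*\cO(1)=\cO_{\P^1}$) gives $\overline{f}^*\Omega_{\P(T_X)/X}\cong E(2,1^{c},0^{m-c-2})$ and hence $\overline{f}^*T_{\P(T_X)/X}\cong E(-2,(-1)^{c},0^{m-c-2})$: your identity ``$T_{\P(T_X)/X}=\Omega_{\P(T_X)/X}(1)\otimes\cO(-1)$'' conflates the tangent and cotangent bundles of the fibration. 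Consequently $\overline{f}^*\cF$ is an extension of the \emph{positive} bundle $E(2,1^{c},0^{m-c-2})$ by the \emph{negative} bundle $E(-2,(-1)^{c},0^{m-c-2})$, not an extension of two positive ``Lagrangian'' copies; your claimed shape $E(2,2,1^{c},1^{c},0^{2m-2c-4})$ has degree $2c+4$ and contradicts your own (correct) degree count $\deg\overline{f}^*T_{\P(T_X)}=0$, a contradiction you notice but never resolve. This matters because the inequality $e\leq c$, which is part of the statement, comes precisely from this sub/quotient structure (bounding the number of $\pm1$ summands of $\overline{f}^*\cF$ by comparing, e.g., $h^0(\overline{f}^*\cF(-1))$ with the contributions of the sub and the quotient); in your write-up it is simply left open (``for some $e$ with $c\le ?$'').

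The second genuine gap is the extension $0\to\overline{f}^*\cF\to\overline{f}^*T_{\P(T_X)}\to\cO_{\P^1}\to 0$: a non-split class would replace $\cO(-2)\oplus\cO$ by $\cO(-1)^{2}$, giving $E(2,(-1)^{e+2},1^{e},0^{2m-2e-4})$, and neither of your proposed fixes works. The curve $\overline{\Gamma}$ is \emph{not} free in $\P(T_X)$ -- its deformations only sweep out $D$, which is in general a proper subvariety, and indeed the final splitting type has negative summands -- and ``the extension class lands in a part that still allows pulling out an $\cO$'' is not an argument. The paper rules out the non-split alternative by a section count: since $\dim\overline{\cM}=2m-3$, one needs $h^0(\P^1,\overline{f}^*T_{\P(T_X)})\geq\dim\overline{\cM}+3=2m$, while the non-split shape gives only $2m-1$; the split shape gives exactly $2m$, and the chain $2m=h^0\geq\dim_{[\overline{f}]}\Hom(\P^1,\P(T_X))\geq\dim\overline{\cM}+3\geq 2m$ then yields at once smoothness at $\overline{\Gamma}$ and that $\overline{\cM}$ is a component of $\rat^n(\P(T_X))$. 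Your own count $h^0=2m+2e$ is miscomputed (the $e$ summands $\cO(1)$ contribute $2e$, not $4e$; the correct value is $2m$), which is why your smoothness discussion lands on the dimension $2m+2e-3$, inconsistent with $\dim\overline{\cM}=2m-3$ unless $e=0$. Fixing the Euler-sequence sign, deriving $e\leq c$ from the resulting extension, and replacing the freeness appeal by the $h^0$ versus $\dim\overline{\cM}+3$ comparison would bring your argument in line with the paper's proof.
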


\begin{proof}
Writing ${f}^*T_X=E(2,1^{c },0^{m-c -1})$ and taking into account that $\overline{f}^*\cO(1)=\cO$, the relative Euler sequence of $\P(T_X)$ over $X$, pulled-back via $\overline{f} $ provides $\overline{f}^*T_{\cX/X}=E(-2,(-1)^{c },0^{m-c -2})$. The upper exact row of diagram (\ref{eq:contact}) provides:
$$
0\to E(-2,(-1)^{c },0^{m-c -2})\longrightarrow \overline{f}^*\cF\longrightarrow E(2,1^{c },0^{m-c -2})\to 0.
$$
On the other hand, $\overline{f}^*\cO(1)=\cO$ also implies that $d\overline{f} :T_{\P^1}\to \overline{f}^*T_{\cX}$ factors via $\overline{f}^*\cF$, hence this bundle has a direct summand of the form $\cO(2)$. Being $\cF$ a contact structure, it follows that $\overline{f}^*\cF\cong\overline{f}^*\cF^\vee$, so this bundle has a direct summand $\cO(-2)$, as well.

From this we may already conclude that
\begin{equation}\label{eq:splitcontact}
\overline{f}^*\cF\cong E(-2,2,(-1)^{e },1^{e },0^{2m-2e -4}), \mbox{ for some }e \leq c ,
\end{equation}
hence the bundle $\overline{f}^*T_\cX$ is isomorphic either to
$E(-2,2,(-1)^{e },1^{e },0^{2m-2e -3})$ or to $E(2,(-1)^{e +2},1^{e },0^{2m-2e -4})$.
On the other hand, the fact that $\dim\overline{\cM} =2m-3$ implies that $h^0(\P^1,\overline{f}^*T_\cX)\geq 2m$, which allows us to discard the second option. 

Finally, in the first case $2m=h^0(\P^1,\overline{f}^*T_\cX)\geq\dim_{[\overline{f} ]}\Hom(\P^1,\cX)\geq \dim\overline{\cM}+3\geq 2m$ and, in particular, $\Hom(\P^1,\cX)$ is smooth at $[\overline{f} ]$. Quotienting by the action of $\Aut(\P^1)$, this also tells us that $\overline{\cM} $ is a component of $\rat^n(\P(T_X))$, smooth at $\overline{\Gamma} $.
\end{proof}

\begin{definition}\label{def:defectcurve}
Given a minimal section $\overline{\Gamma} $ over a standard curve $\Gamma\in\cM$, the number $e$ provided by Proposition \ref{prop:splittype} will be called the {\it defect of $\overline{\cM} $ at}
$\overline{\Gamma} $.
\end{definition}

\begin{remark}\label{rem:corank}
The defect of $e$ of $\overline{\cM} $ at $\overline{\Gamma} $ can be interpreted as the corank of $\ol{q}$ at any point of $\ol{\Gamma}$, minus one. In fact, let us denote by $\overline{f} :\P^1\to \cX$ the normalization of $\overline{\Gamma}$. 
Consider the evaluation morphism $\ev:\Hom_{[\overline{f}]}(\P^1,\cX)\times \P^1\to \cX$, where $\Hom_{[\overline{f}]}(\P^1,\cX)$ stands for the irreducible component of $\Hom(\P^1,\cX)$ containing $[\overline{f}]$ (which is smooth at $[\overline{f}]$ and of dimension $2m$ by Proposition \ref{prop:splittype}). 
The morphism $\ol{q}$ is obtained from this morphism by quotienting by the action of the group of automorphisms of $\P^1$, hence, using the description of this differential provided in \cite[II. Proposition~3.4]{kollar}, it follows that 
the cokernel of the evaluation of global sections $H^0(\P^1,\overline{f}^*(T_\cX))\otimes\cO_{\P^1}\to\overline{f}^*(T_\cX)$ is isomorphic to $E(-2, (-1)^{e})$, and hence it has rank equal to $e+1$. This implies that the corank of $\ol{q}$ at any point of ${\ol{p}}^ {-1}(\Gamma)$ is equal to $e+1$. 
\end{remark}

The next result establishes the relation between $D=\im \ol{q}\subset\cX$
and the VMRT $\cC\subset \P(\Omega_X)$, at the general point $x$. This was first obtained in \cite[Corollary 2.2]{HR} in their study of the moduli space of stable vector bundles on a curve. Our line of argumentation here is based on the proof of \cite[Proposition~1.4]{Hw}. 

\begin{proposition}\label{prop:dual}
Being $x\in X$ general, assume that $\cC_x$ is irreducible. Then $D_{x}:=D\cap \P(T_{X,x})\subset \P(T_{X,x})$ is the dual variety of $\cC_{x}\subset \P(\Omega_{X,x})$.
\end{proposition}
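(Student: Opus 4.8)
The strategy is to work at the general point $x$ and relate the fibre $D_x$ of the evaluation morphism $\ol q$ to the family of minimal sections sitting over curves of $\cM_x$. Recall from Proposition \ref{prop:RCbasic}(3) that, for general $x$, $\cU_x\to\cM_x$ is a $\P^1$-bundle with a section $\sigma_x$ over $q^{-1}(x)$, so a curve $\Gamma\in\cM_x$ is split as $f^*T_X\cong E(2,1^c,0^{m-c-1})$ and the summand $\cO(2)$ corresponds to the tangent direction $\tau_x([\Gamma])\in\cC_x\subset\P(\Omega_{X,x})$. A minimal section $\ol\Gamma$ over $\Gamma$ corresponds to a quotient $f^*T_X\to\cO$; the point of $\ol\Gamma$ lying over $x$ is then a point of $\P(T_{X,x})$ given by the line in $T_{X,x}$ that is the kernel of the induced surjection $T_{X,x}\to\cO_{x}=\C$ on fibres, i.e.\ by a hyperplane $H\subset\Omega_{X,x}^\vee{}^\vee=T_{X,x}$ — equivalently a point of $\P(T_{X,x})$ is the \emph{annihilator} data. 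So I would first set up carefully the dictionary: over a fixed $\Gamma$ through $x$, the points of $\ol\Gamma\cap\P(T_{X,x})$ range over those quotients $f^*T_X\to\cO$; since $\ol f^*\cO(1)=\cO$ forces the quotient to kill the $\cO(2)$-summand (and must kill any positive summand), the fibre of $\ol\Gamma$ over $x$ is a single point, determined by a hyperplane in the fibre containing the tangent direction — in dual terms, by a hyperplane of $\P(\Omega_{X,x})$ that is \emph{tangent} to $\cC_x$ at $[\Gamma]$.

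\medskip
\textbf{Key steps.} (1) Fix the general $x$ and the section $\sigma_x:\cM_x\to\cU_x$; over the locus $\cM_x^0$ of standard curves, pull back $q_0^*\Omega_X$ and push down to identify, for each $[\Gamma]$, the fibre of $\ol{\cM}$ over $[\Gamma]$ with $\P\big((p_{0*}q_0^*\Omega_X)^\vee_{[\Gamma]}\big)$, and track what the point over $x$ of such a minimal section is inside $\P(T_{X,x})$. Using the tangent-map description in Remark \ref{rem:O1} and the splitting $f^*T_X=E(2,1^c,0^{m-c-1})$, show that the vanishing of $\ol f^*\cO(1)$ forces the quotient $f^*T_X\to\cO$ to factor through the trivial part, hence the corresponding point of $\P(T_{X,x})$ lies in the annihilator hyperplane of the tangent direction $\tau_x([\Gamma])$ — i.e.\ it is a point of the tangent hyperplane $\widehat{T_{[\Gamma]}\cC_x}$ (the embedded tangent space of $\cC_x$ at $[\Gamma]$, read as a linear subspace of $\P(T_{X,x})$ via the duality pairing). (2) Conversely, given any hyperplane $h\in\P(T_{X,x})$ tangent to $\cC_x$ at some smooth point $[\Gamma]$, reconstruct a minimal section through the corresponding point: the tangency of $h$ to $\cC_x$ at $[\Gamma]$ says exactly that the corresponding functional on $T_{X,x}$ vanishes on the $\cO(2)$-summand and on the $\cO(1)^c$-summands of $f^*T_X$ (these give the tangent space to $\cC_x$ via Proposition \ref{prop:imm}), hence extends to a surjection $f^*T_X\to\cO$, i.e.\ a minimal section. (3) Conclude that, set-theoretically and over the open locus of smooth points of $\cC_x$, $D_x$ coincides with the union of embedded tangent hyperplanes to $\cC_x$; since $\cC_x$ is assumed irreducible, its dual variety $\cC_x^\vee\subset\P(T_{X,x})$ is by definition (Definition \ref{def:dual}) the closure of exactly this union. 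Taking closures gives $D_x=\cC_x^\vee$.

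\medskip
\textbf{Main obstacle.} The delicate point is the precise bookkeeping in step (1)–(2): identifying the point of $\ol\Gamma$ over $x$ inside $\P(T_{X,x})$ with an element of the \emph{conormal} data of $\cC_x$ at $[\Gamma]$, i.e.\ checking that "minimal section restricted to $x$" is precisely "tangent hyperplane of $\cC_x$ at $[\Gamma]$" and not merely "hyperplane containing $[\Gamma]$". This requires matching the tangent space $T_{[\Gamma]}\cC_x$ — which by Remark \ref{rem:O1} and Proposition \ref{prop:imm} (applicable since general curves are standard) is the image of the $\cO(2)\oplus\cO(1)^c$ part of $f^*T_X$ — with the kernel of the induced functional coming from a quotient $f^*T_X\to\cO$; the compatibility of the two identifications (one via $\sigma_x^*$, the other via biduality) is where care is needed. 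A secondary subtlety is that one works a priori only over the standard locus $\cM_x^0$ and over smooth points of $\cC_x$, so one must invoke irreducibility of $\cC_x$ and pass to closures to get the full equality $D_x=\cC_x^\vee$; the Biduality theorem recalled in Section \ref{sec:nodal} guarantees the reconstruction in step (2) is generically well-defined. Finally I would note that this is essentially the argument of \cite[Proposition~1.4]{Hw} and \cite[Corollary~2.2]{HR}, adapted to the present notation, so the proof can be kept brief.
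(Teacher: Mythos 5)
Your proposal is correct and follows essentially the same route as the paper: it uses the standard splitting $f^*T_X\cong E(2,1^{c},0^{m-c-1})$, observes that a minimal section corresponds to a surjection $f^*T_X\to\cO_{\P^1}$ which necessarily kills the positive summands, identifies the fiber of $E(2,1^{c})$ at the point over $x$ with the affine tangent space of $\cC_x$ at $\tau_x([\Gamma])$ (the fact the paper imports from \cite[Lemma~2.1]{AW} and \cite[Proposition~1.4]{Hw}), and concludes that points of $D_x$ are exactly tangent hyperplanes of $\cC_x$, so that $D_x=\cC_x^\vee$ after taking closures. The only difference is presentational: you spell out both directions of the correspondence and the closure step, while the paper states the one-to-one correspondence directly.
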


%

\begin{proof} 
Let 
$f :\P^1\to X$ be the normalization of a standard element $\Gamma\in\cM $, and $O\in\P^1$ be a point satisfying $f (O)=x$. 
By Proposition \ref{prop:imm}, the tangent map $\tau_{x}:\cM_{x}\to\cC_{x}\subset\P(\Omega_{X,x})$ is immersive at $[\Gamma]$ and we may use it to describe the tangent space of $\cC_{x}$ at $P:=\tau_{x}(\Gamma )$.

Note that we have a filtration $T_{X,x}\supset V_1(f )\supset V_2(f )$, where $V_1(f )$ and $V_2(f )$ correspond, respectively, to the fibers over $O$ of the (unique) subbundles of $f^*T_X$ isomorphic to $E(2,1^{c})$ and $E(2)$. Quotienting by homotheties in $T_{X,x}$ (in order to obtain the Grothendieck projectivization $\P(\Omega_{X,x})$), the subspace $V_2(f)$ provides the point $P$, and it is known that $V_1(f)$ provides the projective tangent space of $\cC_x$ at $P$ (cf. \cite[Lemma~2.1]{AW}, \cite[Proposition~1.4]{Hw}). But then 
the set of hyperplanes in $\P(\Omega_{X,x})$ containing this tangent space, is in one-to-one correspondence (via restriction from $\P(f^*T_X)$ to $\P(T_{X,x})$) with surjective maps $f^*T_X\to\cO_{\P^1}$, that correspond to minimal sections of $\P(T_X)$ over $\Gamma$. 

\end{proof}

\begin{corollary}\label{cor:dualdef1s}
Being $x\in X$ general, assume that $\cC_x$ is irreducible, and let $\overline{\Gamma}$ be a general minimal section of $\cX$ over a standard element of $\cM_{x}$. Then the dual defect of $\cC_{x}$ equals the defect of $\overline{\cM} $ at $\overline{\Gamma}$.
\end{corollary}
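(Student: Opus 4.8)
The plan is to combine Proposition \ref{prop:dual} with the defect computations already established. By Proposition \ref{prop:dual}, for general $x\in X$ the variety $D_x\subset\P(T_{X,x})$ is the dual variety $\cC_x^\vee$ of $\cC_x\subset\P(\Omega_{X,x})$. So the dual defect of $\cC_x$ is, by definition, $e(\cC_x)=(m-1)-1-\dim(D_x)=m-2-\dim(D_x)$, where $m=\dim X$. On the other hand, the defect $e$ of $\overline{\cM}$ at $\overline{\Gamma}$ was interpreted in Remark \ref{rem:corank} as the corank of $\overline{q}$ at any point of $\overline{\Gamma}$ minus one; equivalently, it measures the dimension drop of the image $D=\im(\overline{q})$ inside $\cX=\P(T_X)$. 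The task is to match these two numbers.

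First I would compute $\dim D_x$ in terms of $e$. The fiber $\overline{q}^{-1}(\text{a general point of }D)$ has dimension $\dim\overline{\cU}-\dim D = (2m-2)-\dim D$; on the other hand, by Remark \ref{rem:corank} the corank of $\overline{q}$ at a general point equals $e+1$, so $\dim D = (2m-2)-(e+1) = 2m-3-e$, using that a general fiber of $\overline{q}$ has dimension equal to the corank (this follows from generic smoothness of $\overline{q}$ onto its image, or directly from the splitting type in Proposition \ref{prop:splittype}, since the cokernel $E(-2,(-1)^e)$ has rank $e+1$). Now intersect with the fiber of $\phi:\cX\to X$ over $x$: since $x$ is general and $D$ dominates $X$ (because $\cM$ is dominating, the minimal sections cover $\cX$, at least over the general fiber), one gets $\dim D_x = \dim D - m = m-3-e$.

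Then the conclusion is immediate: the dual defect of $\cC_x$ is
$$
e(\cC_x)=m-2-\dim D_x = m-2-(m-3-e)=e+1-(1)=e,
$$
wait — more carefully, $m-2-(m-3-e)=1+e-1=e$ only if one is careful with the $-1$ offset in the definition of dual defect. I would double-check the indexing: $\cC_x\subset\P(\Omega_{X,x})=\P^{m-1}$, so $r-1=m-2$ in the notation of Definition \ref{def:defect}, and $e(\cC_x)=(m-2)-\dim(\cC_x^\vee)=(m-2)-\dim D_x$. Substituting $\dim D_x=m-3-e$ gives $e(\cC_x)=(m-2)-(m-3-e)=e+1$. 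Hmm, this gives $e+1$, not $e$; so I would need to recheck whether the general fiber of $\overline{q}$ has dimension $e+1$ or $e$, or whether $\overline{q}|_{\overline{p}^{-1}(\Gamma)}$ contracts $\overline{\Gamma}$ itself. In fact the section $\overline{\Gamma}$ sits over $x$ only at one point, and the $\P^1$ is not contracted, so the fiber dimension is exactly the corank; but one should note that $D_x$ is the dual variety \emph{including the tangent hyperplane count}, and the point is precisely that a general hyperplane tangent to $\cC_x$ along a $\P^{e(\cC_x)}$ corresponds to a minimal section, with the tangency $\P^{e(\cC_x)}$ matching the $\P^e$-worth of deformations of $\overline{\Gamma}$ fixing $x$ — which are exactly counted by $e$ via the $E(-2,(-1)^e)$ summand. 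So the clean argument is: the fiber of $\psi:\P(\cN(-1))\to\cC_x^\vee=D_x$ over a general point is $\P^{e(\cC_x)}$ by Biduality (as in the proof of Proposition \ref{prop:split}), while Proposition \ref{prop:dual}'s identification shows this fiber is precisely the set of minimal sections over curves through $x$ with a prescribed tangent hyperplane, which forms a $\P^e$. The main obstacle, and the step I would be most careful about, is this bookkeeping: making sure the $+1$/$-1$ offsets in the definitions of dual defect, corank-minus-one, and fiber dimension all line up, and that $x$ being general guarantees $\cC_x$ irreducible (given) so that Proposition \ref{prop:dual} applies and $D_x$ is genuinely the dual variety rather than a component of it.
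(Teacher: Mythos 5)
Your intended route---combine Proposition \ref{prop:dual} with Remark \ref{rem:corank} and do the dimension count---is exactly the paper's, but as written the argument contains an unresolved off-by-one error rather than a proof. The false step is the claim that ``a general fiber of $\ol{q}$ has dimension equal to the corank.'' The corank $e+1$ of Remark \ref{rem:corank} is measured against the target: it is $\dim\cX-\rank(d\ol{q})$, and since at a general point the rank of $d\ol{q}$ equals $\dim D$, the corank equals $\codim(D\subset\cX)$, not the fiber dimension. Because $\dim\ol{\cU}=2m-2=\dim\cX-1$, the general fiber of $\ol{q}:\ol{\cU}\to D$ has dimension $\codim(D\subset\cX)-1=e$, not $e+1$. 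With the corrected count one gets $\dim D=2m-2-e$, hence (since $D$ dominates $X$) $\dim D_x=m-2-e$, and the dual defect of $\cC_x$ is $(m-2)-\dim D_x=e$, as claimed. This is precisely the paper's two-line proof: by Proposition \ref{prop:dual} the dual defect of $\cC_x$ equals $\codim(D\subset\cX)-1$, and by Remark \ref{rem:corank} this codimension is the corank $e+1$. Instead of locating and fixing the slip, you derived $e(\cC_x)=e+1$, flagged the discrepancy, and left it open (``the step I would be most careful about''), which is exactly the gap.

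The fallback ``clean argument'' does not close it either: you assert that the tangency locus $\P^{e(\cC_x)}$ matches a ``$\P^e$-worth of deformations of $\ol{\Gamma}$ fixing $x$ \ldots exactly counted by $e$ via the $E(-2,(-1)^e)$ summand,'' but that summand has rank $e+1$, so the same $\pm1$ bookkeeping is being assumed rather than proved. If you want to run that variant honestly, compute the dimension of the family of curves of $\ol{\cM}$ through a general point $\ol{x}\in D$ directly from the splitting type of Proposition \ref{prop:splittype}: $h^0\big(\P^1,\ol{f}^*T_{\cX}\otimes\fm_O\big)=e+2$, and quotienting by the $2$-dimensional group of automorphisms of $\P^1$ fixing $O$ gives dimension $e$; comparing with the general fiber $\P^{e(\cC_x)}$ of $\psi$ over $D_x=\cC_x^\vee$ then yields $e(\cC_x)=e$. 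Either repair is fine, but some such computation must actually be carried out; as submitted, the proposal stops short of a proof.
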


\begin{proof}
For general $x$, the dual defect of $\cC_{x}$ equals $\codim(D\subset\cX)-1$, by Proposition \ref{prop:dual}. This number is the corank of $\ol{q}$ at the general point minus one, which is equal to the defect $e$ of $\overline{\cM} $ at $\overline{\Gamma}$ by Remark \ref{rem:corank}.
%
\end{proof}


\section{Proofs}\label{sec:proofs}

\noindent{\it Proof of Theorem \ref{thm:main}}\quad We note first that $\cC_x$ is irreducible. In fact, since by hypothesis $\cM$ is uniform, locally unsplit and dominating, then it follows by Lemma \ref{lem:unifsmooth} that the evaluation morphism $q:\cU\to X$ is smooth. Since, moreover, $\cM$ is unsplit, then $\cU$ is projective, and $q$ is surjective. We may then consider the Stein factorization of $q$, which provides a smooth finite morphism $q':\Spec_X(q_*\cO_\cU)\to X$. Since $X$ is Fano, hence simply-connected, it follows that $q'$ is an isomorphism, which finally tells us that the fibers of $q$ are connected and smooth, thus irreducible.

We now claim that $D_x\subset\P(T_{X,x})$ has only nodal singularities, for the general point $x$. Since we are assuming that $\cM$ is $2$-uniform, then the corank of $\ol{q}$ is constantly equal to $e+1$, by Remark \ref{rem:corank}, and $\ol{q}$ is equidimensional. At this point, we consider diagram (\ref{eq:curvessec}) and restrict its right hand side to $\cM_x$ via the section $\sigma_x$ provided by Proposition \ref{prop:RCbasic} (3). We already know that $\cC_x$ has only nodal singularities by Proposition \ref{prop:imm}; then our description of the dual variety of $\cC_x$ given in Proposition \ref{prop:dual}, tells us that $\phi_\cU^{-1}(\cM_x)\cong\P(\cN(-1))$, where $\cN$ stands for the cokernel of the inclusion $d\tau_x:T_{\cM_x}\to T_{\P(\Omega_{X,x})}$, and $\ol{q}\big(\phi_\cU^{-1}(\cM_x)\big)=D_x$. Then the equidimensionality of $\ol{q}$ tells us that the normalization of $D_x$ is smooth by Corollary \ref{cor:smooth}. Finally, since $\ol{q}$ has constant rank, it follows that the normalization morphism of $D_x$ is unramified, as we claimed.

If  $\cC_x$ is a linear subspace, then $X$ is a projective space (cf. \cite[Proposition~5]{Hw2}), and our hypothesis on $-K_X\cdot\cM$  implies, in this case, that $X\cong \P^1$, that is $(\cD,i)=({\rm{A}}_1, 1)$. Therefore we may assume that neither $\cC_x$ nor $D_x$ are linear subspaces, and we may apply Corollary \ref{cor:embed} to both, obtaining that $\dim(\cC_x)\leq \dim(D_x)$, and $\dim(D_x)\leq \dim(\cC_x)$. Then equality holds and the second part of Corollary \ref{cor:embed} tells us that $\cC_x$ and $D_x$ are both smooth. 

Finally, since we are assuming that $c \leq 2(m-1)/3$, we may apply \cite[Theorem~4.5]{Ein1} to get that $\cC_x \subset \P(\Omega_{X,x})$ is one of the following:
\begin{itemize}
  \item a hypersurface in $\P^2$ or $\P^3$,
  \item the Segre embedding of $\P^1 \times \P^{c-1} \subset \P^{2c-1}$,
  \item the Pl\"ucker embedding of $\G(1,4)$, 
  \item the Spinor variety $S_{4} \subset \P^{15}$. 
\end{itemize}
Since the only smooth nonlinear hypersurfaces with smooth dual have degree two (see, for instance, \cite[10.2]{Tev}), in the first case $\cC_x$ is a conic or a two-dimensional quadric, hence in all cases the listed varieties are projectively equivalent
to the VMRT's of the homogeneous manifolds in the statement, and we may conclude by Theorem \ref{them:HH}.
\par\bigskip
\noindent{\it Proof of Corollary \ref{cor:main}}\quad
Proceeding as in the proof of Theorem \ref{thm:main}, we get that $\cC_x$ and $D_x$ are smooth for the general $x\in X$, and that they have the same dimension. Being $\cC_x$ a nonlinear complete intersection, it follows by  \cite[Theorem 5.11]{Tev} that $D_x$ is a hypersurface, hence $\cC_x$ is a hypersurface, too, and arguing as in the last part of the proof of Theorem \ref{thm:main},  it follows that $\cC_x$ is a smooth hyperquadric, which allows to conclude by Theorem \ref{them:HH}.


\par\bigskip
\noindent{\it Proof of Corollary \ref{cor:e-ample}}\quad
Let $p:\cU\to\cM$ denote the uniform unsplit dominating family of rational curves on $X$, and consider, with the same notation as above, the corresponding family $\ol{p}:\ol{\cU}\to\ol{\cM}$ of minimal sections of $\P(T_X)$ over curves of $\cM$. It is enough to check 
that $\ol{\cM}$ is 
uniform. Let us then denote by $e$ the defect of $\ol{\cM}$ at a general curve $\ol{\Gamma}$.
Note that the contraction $\epsilon:\cX\to \cY$ determined by $\cO_{\P(T_X)}$ can be obtained by as the quotient modulo homotheties of the contraction:
$$
\Spec_X\left(\bigoplus_{r\geq 0} S^rT_X\right)\lra
\Spec\left(\bigoplus_{r\geq 0} H^0(X,S^rT_X)\right)
$$
It is known that this map is a symplectic contraction, hence it is in particular {\it semismall}  (see \cite{1-ample}, \cite{Kal}), and this property is then inherited by our contraction $\epsilon$. It follows then that, for every irreducible closed subset $E\subset\cX$, the general fiber of the restriction $\epsilon_{|E}$ has dimension at most equal to the codimension of $E$ in $\cX$. 

Now, let $E$ be an irreducible component of $\Exc(\epsilon)$ containing $D=\ol{q}(\ol{\cU})$, $L$  the locus of curves of the family $\ol{\cM}$ passing by a general point $\ol{x}\in D$, and $F$ the fiber of the restriction $\epsilon:E\to\epsilon(E)$ passing through $\ol{x}$. Then we have:
$$
\codim(D\subset\cX)\geq\codim(E\subset\cX)\geq\dim(F)\geq\dim(L).
$$
Corollary \ref{cor:dualdef1s} tells us that $e$ is equal to $\codim(D\subset\cX)-1$, and to the dimension of the general fiber of $\ol{q}:\cU\to D$, which implies that $\dim(L)=e+1$. Summing up, we get that $D=E$, and that $F=L$. 

Moreover $F$ is an $(e+1)$-dimensional smooth variety swept out by curves of the family $\ol{\cM}$, which is unsplit, and the dimension of the subfamily of this curves passing by the general point of $F$ is $e$. It then follows by \cite{CMSB} that $F\cong\P^{e+1}$, and curves of $\ol{\cM}$ contained in $F$ are lines. Furthermore, we claim that the smoothness of the map $\epsilon_{|E}:E\to \epsilon(E)$ implies that every fiber $F$ contained in $E$ is necessarily isomorphic to $\P^{e+1}$. In fact, this follows from a classical result of Hirzebruch-Kodaira \cite{HK} (as noted in \cite[Theorem 1']{HM1}).

In particular, this implies that every curve $\ol{\Gamma}$ of $\ol{\cM}$ is a line on a fiber $F\cong\P^{e+1}$ contained in $E$, and so, denoting by $\ol{f}$ the normalization $\ol{\Gamma}$, $\ol{f}^*T_{\cX}$ contains a vector subbundle of the form $E(2,1^e)$. On the other hand, by Proposition \ref{prop:splittype} and semicontinuity, the splitting type of $T_{\cX}$ at $\ol{\Gamma}$ is of the form $(-2,2,(-1)^{e'},1^{e'},0^{2m-3-2e'})$ for $e'\leq e$. We may then conclude that $e'=e$.

\begin{acknowledgements}
The results in this paper were obtained mostly while the second author was a Visiting Researcher at the Korea Institute for Advanced Study (KIAS) and the Department of Mathematics of the University of Warsaw. He would like to thank both institutions for their support and hospitality. The authors would like to thank J. Wi\'sniewski for his interesting comments and discussions on this topic, and an anonymous referee, whose remarks helped to improve substantially the final form of this paper.
\end{acknowledgements}

\bibliographystyle{spmpsci}      



\end{document}